\documentclass[11pt]{amsart}

\setlength{\oddsidemargin}{0.01 in}
\setlength{\evensidemargin}{0.01in} \setlength{\textwidth}{5.9in}
\setlength{\topmargin}{-0.0 in} \setlength{\textheight}{8in}
\usepackage{amssymb,amsmath,amsthm,latexsym,color, graphicx}
\usepackage{pifont}
\usepackage{verbatim}
\usepackage{amsfonts}

\title[The Cauchy problem  for the 1-D    quadratic fractional heat equation]
{Remarks on the Cauchy problem  for the one-dimensional    quadratic (fractional) heat equation}
\author[L. Molinet and S. Tayachi] {Luc Molinet and Slim Tayachi}
\address{\newline Luc Molinet\newline Laboratoire de Math\'ematiques et Physique
Th\'eorique\newline Universt\'e Francois Rabelais Tours,\newline
CNRS UMR 7350- F\'ed\'eration Denis Poisson \newline Parc Grandmont, 37200
Tours, France\newline e-mail: {\tt Luc.Molinet@lmpt.univ-tours.fr}
\vspace{1cm}\newline Slim Tayachi\newline Department of Mathematics
\newline Faculty of Science of Tunis\newline University Tunis El
Manar\newline 2092 Tunis,  Tunisia
\newline e-mail: {\tt slim.tayachi@fst.rnu.tn}}

\font\TenEns=msbm10 \font\SevenEns=msbm7 \font\FiveEns=msbm5
\newfam\Ensfam
\def\Ens{\fam\Ensfam\TenEns}
\textfont\Ensfam=\TenEns \scriptfont\Ensfam=\SevenEns
\scriptscriptfont\Ensfam=\FiveEns
\def\R{{\Ens R}}

\def\N{{\Ens N}}
\def\Z{{\Ens Z}}
\def\T{{\Ens T}}

\newtheorem{To}{Theorem}
\newtheorem{prop}{Proposition}[section]

\newtheorem{lem}[prop]{Lemma}
\newtheorem{rem}[prop]{Remark}
\newtheorem{defini}[prop]{Definition}

\begin{document}

\maketitle

\medskip

\begin{abstract}
We prove that the Cauchy problem associated with  the one dimensional quadratic
 (fractional) heat equation: $u_t=D_x^{2\alpha} u \mp u^2,\; t\in (0,T),\; x\in \R$ or $ \T $,
with $ 0<\alpha\le 1 $ is well-posed in $ H^s $ for $ s\ge \max(-\alpha,1/2-2\alpha) $ except in the
case $ \alpha=1/2 $ where it  is shown to be  well-posed for $ s>-1/2 $ and ill-posed for $ s=-1/2 $.
As a by-product we
improve  the known well-posedness
 results for  the heat equation ($\alpha=1$) by reaching the  end-point Sobolev  index $ s=-1 $.
 Finally, in the case $ 1/2<\alpha\le 1 $, we also prove
 optimal results in the Besov spaces $B^{s,q}_2.$
\end{abstract}

\medskip

 {\it Keywords:} Nonlinear heat equation, Fractional heat equation, Ill-posedness,
 Well-posedness, Sobolev spaces, Besov spaces.

\vspace{0.2cm} {\it 2000 AMS Classification:} 35K15, 35K55, 35K65,
35B40

\vspace{0.5cm}

\section{Introduction and main results}
The Cauchy problem  for the quadratic fractional heat equation reads \setcounter{equation}{0}
\begin{equation}
\label{NLH} u_t -  D_x^{2\alpha} u  =\mp  u^2,
\end{equation}
\begin{equation}
\label{NLHinitial} u(0,\cdot) =u_0,
\end{equation}
where $u=u(t,x)\in \R\; , \alpha \in ]0,1],\; t\in (0,T),\; T>0,\;
x\in\R$ or $ \T $ and $ D^{2\alpha}_x$ is the Fourier multiplier by
 $ |\xi|^{2\alpha} $.
In this paper, we
consider actually   the corresponding integral equation which is given by
\begin{equation}
\label{NLHint} u(t) = S_{\alpha}(t)u_0 \mp \int_0^t
S_{\alpha}(t-\sigma) \big(u^2(\sigma)\big) d\sigma,
\end{equation}
where $S_{\alpha}(t)$ is the linear fractional  heat semi-group and
are interested in  local well-posedness and ill-posedness  results
in the Besov spaces $ B^{s,q}_2(K) $ with $ s\in \R $, $ q\in
[1,\infty[ $ and  $ K=\R$ or $ \T.$

Let us recall that the Cauchy problem associated with the nonlinear heat equation in $ \R^n $
\begin{equation}\label{HE}
u_t - \Delta u =\mp u^k
\end{equation}
 has been studied in many papers (see for instance \cite{BC,G,Gi,HW,H,MRY,R1,R2,STW1,STW2,W1,W2,W3,W}
 and references therein). It is well-known that this equation is invariant by  the space-time dilation symmetry
  $ u(t,x) \mapsto u_\lambda(t,x)= \lambda^{2\over k-1} u(\lambda^2 t, \lambda x) $ and that
  the homogeneous Sobolev space $ \dot{H}^{\frac{n}{2}-\frac{2}{k-1}} $ is invariant by the
  associated space dilation symmetry $ \varphi(x) \mapsto \lambda^{2\over k-1} \varphi(\lambda x) $.
  The Cauchy problem \eqref{HE} is known to be well-posed in $H^s $ for $ s\ge s_c= \frac{n}{2}-\frac{2}{k-1} $
  except in the case $ (n,k)=(1,2)$. Indeed, in this case the well-posedness is only known in $ H^s $
  for $ s>-1 $ and in \cite{MRY} it is proven that the flow-map cannot be of class $ C^2 $ below $ H^{-1}$.
  Hence,  this result is close to be optimal if one requires the smoothness of the flow-map. Recently,
  it was proven in \cite{IO} that the associated solution-map : $ u_0 \mapsto
   u $ cannot be  even continuous in $ H^s $ for $ s<-1 $. The first aim of this work is to push down
   the well-posedness result to the end point $ H^{-1}$.  The second step is to extend these type of results
    for the one-dimensional quadratic fractional heat equation \eqref{NLH}. Indeed we will derive optimal
     results for the Cauchy problem \eqref{NLH} in the scale of the Besov spaces
   $ B^{s,q}_2 $ in the case $ \frac{1}{2}< \alpha \le 1 $. In particular we will prove that the lowest
   reachable Sobolev index is $ -\alpha $ that is strictly bigger then the critical Sobolev index for
    dilation symmetry that is $ 1/2-2\alpha $. \\
To reach the end-point index $ H^{-\alpha}$ we do not follow the
classical method for parabolic equations (cf. \cite{G,R2,W}) that
does not seem to be applicable here. We rather rely on an approach
that was first introduced by Tataru \cite{tataru} in the context of
wave maps. Note that  we mainly follow \cite{MV} where this method
has been adapted for dispersive-dissipative equations.  The fact
that our equation is purely parabolic enables us  to simplify the
proof. The optimality of our results follows from an approach first
introduced  by Bejenaru-Tao \cite{BT} for a one-dimensional
quadratic Schr\"odinger equation. This approach is based on a
high-to low frequency cascade argument.

Finally we consider the case $ 0<\alpha\le 1/2 $. By classical parabolic methods we obtain the well-posedness
in the Sobolev space \footnote{Recall that $1/2-2\alpha $ is the critical Sobolev index for dilation symmetry.}
$ H^{s} (\R) $, $s\ge 1/2-2\alpha $, unless $ \alpha=1/2$.  On the other hand, following a very nice result by
Iwabuchi-Ogawa \cite{IO}, we prove that \eqref{NLH} is ill-posed in $H^{-1/2}(\R) $ for $\alpha=1/2$.
It is worth noticing that $ (1/2,-1/2) $ is the intersection of the straight  borderlines for well-posedness
that are  $s=-\alpha $ and $s=1/2-2\alpha $.

  Before stating our main result, let us give the precise definition of well-posedness we will use in this paper.
 \begin{defini}
  We will say that  the  Cauchy problem \eqref{NLH}-\eqref{NLHinitial} is  (locally)  well-posed   in some  normed
  function space $ B $  if, for any initial data $ u_0\in B $, there exist a radius $ R>0 $, a  time
  $ T>0 $  and a unique solution $ u$ to \eqref{NLHint},  belonging to some space-time function space
  continuously embedded in $ C([0,T];B) $, such that for any $ t\in [0,T] $ the map
   $ u_0\mapsto u(t) $ is continuous from the ball of $ B $ centered at $ u_0 $ with radius $ R $
    into $ B $.    A Cauchy problem will be  said to be ill-posed if it is not well-posed.
 \end{defini}
\begin{To} Let $ K=\R $ or $\T $
 and  $ \alpha \in ]1/2,1] $. The Cauchy problem \eqref{NLH} is locally well-posed in the Besov
 space $ B^{s,q}_2(K) $ if and only if  $(s,q)\in \R\times [1,+\infty[  $ satisfies
 $ s>-\alpha $ or $ s=-\alpha $ and $ q\in [1,2]$.
\end{To}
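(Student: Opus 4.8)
The statement being an equivalence, the proof splits into a constructive part (local well-posedness when $s>-\alpha$, or $s=-\alpha$ and $q\le 2$) and an obstruction part (ill-posedness otherwise).

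\emph{Well-posedness.} Following \cite{MV} (itself inspired by \cite{tataru}), the plan is to solve the integral equation \eqref{NLHint} by a fixed-point argument in a resolution space adapted to the parabolic symbol. Writing $P_N$ for the Littlewood--Paley projector at frequency $\sim N$, I would introduce for each dyadic $N$ a space $Z_N$ of space-time functions localized at frequency $\sim N$ combining a Bourgain-type norm $\|\langle\tau+|\xi|^{2\alpha}\rangle^{1/2}\widehat v\|_{L^2_{t,x}}$ with the Tataru-type low-modulation correction $\langle N\rangle^{2\alpha}\big\|\langle\tau+|\xi|^{2\alpha}\rangle^{-1}\widehat v\big\|_{L^2_\xi L^1_\tau}$, and set
\[
\|u\|_{X^{s,q}}=\Big(\sum_{N}\big(N^{s}\,\|P_N u\|_{Z_N}\big)^{q}\Big)^{1/q},
\]
with the usual time-localized version $X^{s,q}_T$. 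The correction term is included precisely so that $X^{s,q}_T\hookrightarrow C([0,T];B^{s,q}_2(K))$ holds up to the endpoint $s=-\alpha$, avoiding the logarithmic failure of a bare $\langle\cdot\rangle^{1/2}$ space. Two ingredients then suffice: (i) the linear estimate $\|\chi_{[0,T]}(t)S_\alpha(t)u_0\|_{X^{s,q}_T}\lesssim\|u_0\|_{B^{s,q}_2(K)}$, which is straightforward since $S_\alpha$ is exactly the free evolution of $|\xi|^{2\alpha}$ and the parabolic smoothing makes the $Z_N$-norms finite; and (ii) the bilinear estimate
\[
\Big\|\int_0^t S_\alpha(t-\sigma)\big(uv\big)(\sigma)\,d\sigma\Big\|_{X^{s,q}_T}\lesssim \|u\|_{X^{s,q}_T}\,\|v\|_{X^{s,q}_T}.
\]
Granting these, a standard contraction in a ball of $X^{s,q}_T$ yields the unique solution (the existence time depending, for $s>-\alpha$, only on $\|u_0\|_{B^{s,q}_2}$), and the bilinear estimate also gives the Lipschitz continuity of the solution map on bounded sets, hence well-posedness in the sense of the Definition. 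The cases $K=\R$ and $K=\T$ are handled identically (Fourier integrals versus Fourier series), and the sign $\mp$ is irrelevant here.

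\emph{The bilinear estimate and the main obstacle.} The heart of the matter is the bilinear estimate, and the main obstacle is its endpoint case. I would dyadically decompose the output frequency $N$ and the inputs $N_1,N_2$ and separate the interactions $N\sim\max(N_1,N_2)$ ("low--high") from $N_1\sim N_2\gg N$ ("high--high to low"). In the first the parabolic gain of up to $2\alpha$ derivatives coming from $\int_0^t S_\alpha(t-\sigma)\,d\sigma$ (i.e.\ from $\langle\tau+|\xi|^{2\alpha}\rangle^{-1}$), together with Bernstein in $x$, closes with room to spare for all $s>1/2-2\alpha$. The decisive case is high--high to low: the output carries $N^{s}$ against $N_1^{2s}$ from the inputs, so one must recover $(N_1/N)^{-2s}$ when $s<0$. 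The crucial point is the absence of cancellation in the dissipative "resonance", $|\xi|^{2\alpha}\ll|\xi_1|^{2\alpha}+|\xi_2|^{2\alpha}\sim N_1^{2\alpha}$, which forces at least one of the three modulation weights $\langle\tau+|\xi|^{2\alpha}\rangle,\ \langle\tau_j+|\xi_j|^{2\alpha}\rangle$ to be $\gtrsim N_1^{2\alpha}$; this gives a gain scaling like $N_1^{-2\alpha}$ which, combined with an $L^2$ orthogonality argument in the low output variable, closes the dyadic sum exactly when $2|s|\le 2\alpha$, i.e.\ $s\ge-\alpha$. At $s=-\alpha$ the remaining summation is scale-invariant and only borderline — essentially unlimitedly many high-frequency dyadic blocks feed resonantly into a single low-frequency block — and carrying the $\ell^q$-Besov norm through this resonant sum is possible if and only if $q\le 2$. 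Verifying all this, and checking that the $L^1_\tau$ endpoint component of $Z_N$ is simultaneously strong enough to embed in $C([0,T];B^{-\alpha,q}_2)$ and weak enough to contain the Duhamel term, is where the real work lies; the linear estimate, the easy interactions, the contraction, and the transcription to $\T$ are routine.

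\emph{Ill-posedness.} For $s<-\alpha$, or $s=-\alpha$ with $q>2$, I would run the high-to-low frequency cascade of \cite{BT}. For $s<-\alpha$, take $u_0=u_0^{(N)}$ with $\widehat{u_0^{(N)}}$ essentially a bump of height $\sim N^{-s}$ on a unit interval centred at $|\xi|\sim N$ (together with its Hermitian reflection), so that $\|u_0^{(N)}\|_{B^{s,q}_2(K)}\sim 1$. A direct computation shows $(S_\alpha(\sigma)u_0^{(N)})^{2}$ has a frequency-$O(1)$ component of size $\sim N^{-2s}e^{-2\sigma N^{2\alpha}}$, hence the second Picard iterate $\mp\int_0^t S_\alpha(t-\sigma)\big(S_\alpha(\sigma)u_0^{(N)}\big)^2\,d\sigma$ has a frequency-$O(1)$ component of size $\sim N^{-2s}N^{-2\alpha}=N^{-2(s+\alpha)}$ for $t\gtrsim N^{-2\alpha}$; its $B^{s,q}_2(K)$-norm is therefore $\gtrsim N^{-2(s+\alpha)}\|u_0^{(N)}\|_{B^{s,q}_2(K)}^{2}$, unbounded as $N\to\infty$. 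For $s=-\alpha$ and $q>2$ one instead superposes $J$ such bumps at well-separated dyadic frequencies, each normalized so that $\|u_0\|_{B^{-\alpha,q}_2(K)}\sim 1$: the $J$ diagonal contributions all land in the frequency-$O(1)$ block with the same sign and add up to size $\sim J^{1-2/q}\to\infty$. In both cases the second iterate is unbounded on $\{\|u_0\|_{B^{s,q}_2}=1\}$ at a fixed time, which already rules out a $C^2$ flow-map; following \cite{IO} this can be upgraded to a norm-inflation statement — data tending to $0$ in $B^{s,q}_2$ whose solutions, compared with their first two Picard iterates, have norm bounded below — contradicting the continuity required in the Definition. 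The construction works verbatim on $\R$ and on $\T$ and does not see the sign $\mp$.
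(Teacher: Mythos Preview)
Your ill-posedness argument matches the paper's: the same high-to-low cascade with the same data (a single high-frequency bump for $s<-\alpha$; a superposition over $\sim N$ dyadic scales for $s=-\alpha$, $q>2$) and the same transfer from the second Picard iterate to the full solution via the analytic expansion coming from the $H^{-\alpha}$ theory.

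Your well-posedness route, however, is genuinely different. You propose Bourgain-type spaces with a modulation weight and a Tataru $L^1_\tau$ correction, following \cite{MV} literally. The paper instead exploits that \eqref{NLH} is purely parabolic to avoid space-time Fourier analysis altogether: the resolution space is $E^{s,q}_{\alpha,T}=X^{s,q}_{\alpha,T}+Y^{s,q}_{\alpha,T}$ with $X=\tilde L^\infty_T B^{s,q}_2\cap \tilde L^2_T B^{s+\alpha,q}_2$ coming directly from the energy identity for the linear equation (Lemma~\ref{p1}) and $Y$ an $L^1_t$-type space carrying the full $2\alpha$ Duhamel smoothing (Lemma~\ref{p3}). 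The bilinear estimate is then a short para-product computation in physical space: the high--high$\to$low piece closes via the geometric sum $\sum_j 2^{jq(1/2-\alpha)}$ (this is where $\alpha>1/2$ enters), and the remaining piece needs only $\|u\|_{L^2_{T,x}}\lesssim\|u\|_{\tilde L^2_T B^{s+\alpha,q}_2}$, i.e.\ the embedding $B^{0,q}_2\hookrightarrow L^2$, which pinpoints the threshold $q\le2$ at $s=-\alpha$ without any modulation bookkeeping. Your framework may well be viable, but two caveats are worth flagging: the weight $\langle\tau+|\xi|^{2\alpha}\rangle$ as written is a Schr\"odinger-type weight (it vanishes on the real hypersurface $\tau=-|\xi|^{2\alpha}$), whereas the parabolic symbol is $i\tau+|\xi|^{2\alpha}$ and the natural weight is $(1+\tau^2+|\xi|^{4\alpha})^{1/2}$; and since $S_\alpha(t)$ is undefined for $t<0$, the usual smooth time-cutoff device from dispersive $X^{s,b}$ theory requires adjustment. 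The paper's physical-space route sidesteps both issues and is substantially shorter.
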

\begin{rem}
Our negative results can be stated more precisely in the following way : For any couple
 $(s,q)\in \R\times [1,+\infty[  $ satisfying,
 $ s<-\alpha$ or  $ s=-\alpha $ and $ q>2$,  there exists $ T> 0 $ such that the
 flow-map $ u_0\mapsto u(t) $ is not continuous at the origin  from  $ B^{s,q}_2(K) $
 into $ {\mathcal  D}'(K) $ for any $ t\in ]0,T[ $.
\end{rem}
This paper is organized as follows. In the next section we define
our resolution spaces in the case $ K=\R $. In Section 3 we derive
the needed linear estimates on the free term and the retarded
Duhamel operator and in Section 4 we prove our well-posedness
result. Section 5 is devoted to  the non-continuity results for the
same range of $ \alpha$. In Section 6 we complete the well-posedness
results by considering the case   $ 0<\alpha \le 1/2 $. First, by
classical parabolic methods,   we prove that we can reach  the
critical Sobolev index for dilation symmetry that is $ 1/2-2\alpha $
unless $\alpha=1/2$. Then, following  \cite{IO}, we prove that
\eqref{NLH} is ill-posed in $H^{-1/2}(\R) $ for $\alpha=1/2$.
Finally we explain the needed adaptations in the periodic case $
K=\T $.

Throughout the paper, we will write $f\lesssim g,$ whenever a
constant $C\geq 1,$ only depending on parameters and not on $t$ or
$x$, exists such that $f\leq Cg$. We write $f\sim g$ if $f\lesssim
g,$ and $g\lesssim f.$ If $C$ depends on parameters $a,$ we write
$f\lesssim_a g,$ instead.

\section{Resolution Space}\label{section2}
We use the following definition for the Fourier transform
$${\mathcal F}({f})(\xi)=\int_\R f(x)e^{-ix\xi}dx,$$
and the inverse Fourier transform is
$${\mathcal F}^{-1}({f})(x)={1\over 2\pi}\int_\R{\mathcal F}({f})(\xi) e^{ix\xi}d\xi,$$
\setcounter{equation}{0} for $f$ in ${\mathcal S}(\R),$ the Schwartz
space of rapidly decreasing smooth functions, and by duality if $f$
in ${\mathcal S}'(\R),$ the space of tempered distributions. We
denote sometimes ${\mathcal F}({f})$ by $\hat{f}.$ The fractional
power of the Laplacien can be defined by the Fourier transform: For
$\alpha\in \R,$
$${\mathcal F}\big((-\partial^2_x)^{\alpha}f\big)(\xi)=|\xi|^{2\alpha}{\mathcal F}({f})(\xi).$$ Let $s$
be a real number. The Sobolev space $H^s(\R)$ is defined by
$$H^s(\R)=\{u\in \mathcal{S}'(\R)\; |\;
\int_{\R}(1+|\xi|^2)^s|\mathcal{F}(u)(\xi)|^2d\xi<\infty\}$$ where
$\mathcal{F}(u)$ is the Fourier transform of $u.$ The norm on
$H^s(\R)$ is defined by
$$||u||_{H^s(\R)}=\Big(\int_{\R}(1+|\xi|^2)^s|\mathcal{F}(u)(\xi)|^2d\xi\Big)^{1/2}.$$
We will need a Littlewood-Paley analysis. Let $\eta\in C^\infty_0(\R)$ be a non negative even function such that  $\mbox{supp}\,  \eta\subset [-2,2]$ and $\eta\equiv 1$ on $[-1,1]$. We define  $\varphi(\xi)=\eta(\xi/2)-\eta(\xi)$ and the Fourier multipliers
$$\mathcal{F}(\Delta_ju)(\xi)=\varphi(2^{-j}\xi)\mathcal{F}u(\xi),\;
j\geq 0, \quad \mbox{ and }\quad \mathcal{F}(\Delta_{-1}
u)(\xi)=\eta(\xi)\mathcal{F}u(\xi)\; .$$ For any $ s\in \R $ and $
q\ge 1 $, the Besov space $ B^{s,q}_2(\R) $ is defined as the
completion of $ {\mathcal S}(\R) $ for the norm
$$
\|u\|_{B^{s,q}_2(\R)} =\Big(\sum_{j\geq
-1}2^{jsq}||\Delta_ju||^q_{L^2(\R)}\Big)^{1/q} \; .
$$
For  $s\in \R $, $s_1<s_2$, $1\le q_1\leq q_2$  and $ q\ge 1 $ we have the following embeddings
$$B^{s,q_1}_2\hookrightarrow B^{s,q_2}_2
\;  \mbox{ and  } \; B^{s_2,q}_2\hookrightarrow
B^{s_1,1}_2.$$
Moreover, It is well-known that the
$H^{s}(\R)$-norm  is equivalent to the $ B^{s,2}_2 $-norm
so that $ H^s(\R) = B^{s,2}_2(\R) $. \\
Finally,
for $ 1\le p \le \infty $ we consider the space-time space $\tilde{L}^p(\R; B^{s,q}_2)$ equipped with the norm
$$
 \| u\|_{\tilde{L^p_t}  B^{s,q}_2}=\Bigl[ \sum_{j\ge -1 }  2^{s  j q } \|\Delta_j u(t)
 \|_{L^p_t L^2_x}^q \Bigr]^{1/q}.
$$
We are now able to define our resolution space. For $T>0$ fixed, we  consider the space $X^{s,q}_{\alpha,T}=\tilde{L}_T^\infty
B^{s,q}_2\cap \tilde{L}_T^2 B^{s+\alpha,q}_2 $
  equipped with the norm:
$$\|u\|_{X^{s,q}_{\alpha,T}}=\Bigl[ \sum_{j} \sup_{t\in ]0,T[} 2^{s  j q } \|\Delta_j u(t) \|_{L^2_x}^q \Bigr]^{1/q}  +
\Bigl[ \sum_{j} 2^{j(s+\alpha)q}  \|\Delta_j u \|_{L^2_T L^2_x}^q \Bigr]^{1/q}
.$$ Let us also consider the space
$$Y^{s,q}_{\alpha}:= \Big\{u\in \tilde{L}^1\big(\R_+^* ; B^{s+2\alpha,q}_2(\R)\big) \mbox{ and } \partial_t u\in \tilde{L}^1\big(\R_+^*;
B^{s,q}_2(\R)\big)\Big\}
$$
 equipped with the norm:
$$\|u\|_{Y^{s,q}_{\alpha}}=\Bigl[ \sum_{j}  2^{(s+2\alpha)  j q } \|\Delta_j u\|_{L^1_t L^2_x}^q \Bigr]^{1/q}  +
\Bigl[ \sum_{j}  2^{s  j q }  \|\Delta_j u_t \|_{L^1_t L^2_x}^q \Bigr]^{1/q}
$$
For $ T>0 $,  the restriction space $ Y^{s,q}_{\alpha,T} $ of $
Y^{s,q}_\alpha $ is endowed with the usual norm
$$
\|u\|_{Y^{s,q}_{\alpha,T}} = \inf_{v\in Y}  \{ \|v\|_{Y^{s,q}_{\alpha}} \, , \, v\equiv u \mbox{ on } ]0,T[ \, \} \; .
$$
For $T>0 $  our resolution space will be
$E_{\alpha,T}^{s,q}=X^{s,q}_{\alpha,T}+Y^{s,q}_{\alpha,T}$ endowed with the usual norm for a sum space :
$$\
\|u\|_{E_{\alpha,T}^{s,q}}:=\inf_{u=v+w}(\|v\|_{X^{s,q}_{\alpha,T}}+\|w\|_{Y^{s,q}_{\alpha,T}}).$$
\section{Linear estimates}\label{linear}
We first establish  the following lemma.
\begin{lem}
\label{p1} Let $0<T\le 1  $ and $\varphi\in B^{s,q}_2.$ Then we have
\begin{equation}
\label{2'} \|S_{\alpha}(t)\varphi\|_{X^{s,q}_{\alpha,T}}\lesssim
\|\varphi\|_{ B^{s,q}_2} \; .
\end{equation}
\end{lem}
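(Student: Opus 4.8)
The plan is to reduce the estimate, block by block in the Littlewood--Paley decomposition, to an elementary bound on the Fourier side. Recall that $S_\alpha(t)$ is the Fourier multiplier by $e^{-t|\xi|^{2\alpha}}$, so that for each $j\ge -1$ the function $\Delta_j S_\alpha(t)\varphi$ has Fourier transform supported in $\{|\xi|\lesssim 2^j\}$ and equal to $\psi_j(\xi)\,e^{-t|\xi|^{2\alpha}}\,\hat\varphi(\xi)$, where $\psi_j=\varphi(2^{-j}\cdot)$ for $j\ge 0$ and $\psi_{-1}=\eta$. Since $0\le e^{-t|\xi|^{2\alpha}}\le 1$ for all $t\ge 0$, Plancherel immediately gives $\|\Delta_j S_\alpha(t)\varphi\|_{L^2_x}\le \|\Delta_j\varphi\|_{L^2_x}$ uniformly in $t\in\,]0,T[$, and therefore, after taking the $2^{jsq}$-weighted $\ell^q$ sum over $j$, the $\tilde L^\infty_T B^{s,q}_2$ part of $\|S_\alpha(t)\varphi\|_{X^{s,q}_{\alpha,T}}$ is controlled by $\|\varphi\|_{B^{s,q}_2}$.

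The main point is the gain of $\alpha$ derivatives in the $\tilde L^2_T B^{s+\alpha,q}_2$ part, which encodes the parabolic smoothing. Using Plancherel and Fubini I would write
\[
\|\Delta_j S_\alpha(\cdot)\varphi\|_{L^2_T L^2_x}^2 \;=\; \frac{1}{2\pi}\int_\R \psi_j(\xi)^2\,|\hat\varphi(\xi)|^2\left(\int_0^T e^{-2t|\xi|^{2\alpha}}\,dt\right)d\xi,
\]
and bound the inner time integral by $\min\bigl(T,\ (2|\xi|^{2\alpha})^{-1}\bigr)$. For $j\ge 0$ one has $|\xi|\sim 2^j$ on $\mathrm{supp}\,\psi_j$, so this is $\lesssim 2^{-2\alpha j}$; hence $2^{\alpha j}\|\Delta_j S_\alpha(\cdot)\varphi\|_{L^2_T L^2_x}\lesssim \|\Delta_j\varphi\|_{L^2_x}$ with a constant independent of $j$ and $T$. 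For the single low-frequency block $j=-1$ the factor $|\xi|^{-2\alpha}$ is not available, but there one simply uses the bound by $T\le 1$, which yields the same conclusion up to the harmless constant $2^{-\alpha}$. Multiplying by $2^{jsq}$, summing the resulting inequalities in $\ell^q$ over $j\ge -1$, and combining with the first paragraph gives $\|S_\alpha(t)\varphi\|_{X^{s,q}_{\alpha,T}}\lesssim \|\varphi\|_{B^{s,q}_2}$.

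I do not expect a genuine obstacle in this lemma: it is the standard smoothing estimate for the fractional heat semigroup, localized in frequency. The only items deserving care are treating the $j=-1$ block separately (where one must fall back on $T\le 1$ rather than on frequency decay) and checking that every implicit constant is uniform in both $j$ and $T\in\,]0,1]$, so that the final $\ell^q$ summation over the dyadic blocks reproduces exactly the Besov norm of $\varphi$.
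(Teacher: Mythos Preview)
Your proof is correct. The paper argues via the energy identity: multiplying the linear equation $\partial_t u + D_x^{2\alpha}u=0$ by $u$ and integrating on $]0,t[\times\R$ yields simultaneously the $L^\infty_t L^2_x$ bound and the $L^2_t \dot H^\alpha_x$ bound for each dyadic block, then handles $j=-1$ separately using $T\le 1$. You instead work directly on the Fourier side, using Plancherel and the explicit bound $\int_0^T e^{-2t|\xi|^{2\alpha}}\,dt\le\min(T,(2|\xi|^{2\alpha})^{-1})$. For a constant-coefficient linear equation these two arguments are essentially the same computation in different clothing; your version is arguably a bit more direct, while the paper's energy form makes the structure (conservation plus dissipation) more visible and generalizes more readily to variable-coefficient or nonlinear settings. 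Both treat the low-frequency block $j=-1$ by falling back on $T\le 1$, which is the only place the hypothesis $0<T\le 1$ is used.
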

\begin{proof}
The standard smoothing effect of the (fractional) heat semi-group is not sufficient here
since we have
$$\|S_{\alpha}(t)\varphi\|_{B^{s+\alpha,q}_2}\lesssim
t^{-\frac{1}{2}}\|\varphi\|_{B^{s,q}_2}$$ and the right hand side of
this inequality is not square integrable near $t=0.$ Integrating by
parts the linear fractional heat equation
\begin{equation}\label{linearheat}
\partial_tu-D^{2\alpha}_x u=0
\end{equation}
on $]0,t[\times \R $, $t>0$, against $ u $ and  using that $ u(0)=\varphi $, we obtain
$$
\int_{\R} u^2(t,x) \, dx+\int_0^t \int_{\R} |D^{\alpha}_x u (s,x)|^2 \, dx\, ds = \int_{\R} \varphi^2(x) \, dx \;.
$$
Using that for each $ j\in \N $, $ \Delta_j S_\alpha (t)
D^{s}_x\varphi $ satisfies the linear fractional  heat equation
\eqref{linearheat} with
 $ D^{s}_x \varphi $ as initial datum, powering in $ q/2$ and then summing in $ j\ge 0 $, we get for any $ T>0 $,
$$
\Bigl( \sum_{j\ge 0}2^{s j q }\|\Delta_j
S_{\alpha}(t)\varphi\|_{L^\infty_T L^2_x}^q\Bigr)^{1/q} + \Bigl(
\sum_{j\ge 0}2^{(s+\alpha) j q }\|\Delta_j
S_{\alpha}(t)\varphi\|_{L^2_T L^2_x}^q \Bigr)^{1/q} \lesssim \Bigl(
\sum_{j\ge 0}2^{s j q}\| \Delta_j \varphi\|_{L^2_x}^q \Bigr)^{1/q}
\; .
$$
On the other hand, for $ j=-1 $ we write
$$
\|\Delta_{-1}  S_{\alpha}(t)\varphi\|_{L^\infty_T
L^2_x}+\|\Delta_{-1}  S_{\alpha}(t)\varphi\|_{L^2_T L^2_x}\le 2\,
T^{1/2} \|\Delta_{-1}  S_{\alpha}(t)\varphi\|_{L^\infty_T L^2_x} \le
2 \,  T^{1/2}\|\Delta_{-1}  \varphi\|_{L^2_x}
$$
and the result follows.
\end{proof}
As a direct consequence we get the following estimate on the semi-group :
 Let $ 0<T\le 1  $ and $\varphi\in B^{s,q}_2 $ then it holds
\begin{equation}\label{2}
\|S_{\alpha}(t)\varphi\|_{E_{\alpha,T}^{s,q}}\lesssim \| S_{\alpha}(t)\varphi\|_{X^{s,q}_{\alpha,T}}\lesssim
\|\varphi\|_{B^{s,q}_2}\; .
\end{equation}

Let us now define the operator $\mathcal{L}_{\alpha}$ by

\begin{equation}
\label{definL} \mathcal{L}_{\alpha}(f)(t,x)=\int_0^tS_{\alpha}(t-t')f(t')dt'.
\end{equation}

Then we have
\begin{lem} \label{p3} Let $ 0<T\le1 $ and $f\in E_{\alpha,T}^{s,q}.$
Then we have
\begin{equation}
\label{1'} \|\mathcal{L}_{\alpha}(f)\|_{Y^{s,q}_{\alpha,T}}\lesssim
(1+T) \|f\|_{\tilde{L}^1_T B^{s,q}_2}\; .
\end{equation}
\end{lem}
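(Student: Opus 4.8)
The plan is to estimate the two norms defining $Y^{s,q}_{\alpha,T}$ separately for $u=\mathcal{L}_\alpha(f)$, namely $\|\Delta_j u\|_{L^1_tL^2_x}$ with weight $2^{(s+2\alpha)jq}$ and $\|\Delta_j u_t\|_{L^1_tL^2_x}$ with weight $2^{sjq}$. For each fixed $j\ge 0$ I would work at the level of a single dyadic block, where $\Delta_j S_\alpha(t)$ behaves like multiplication by $e^{-t|\xi|^{2\alpha}}$ on frequencies $|\xi|\sim 2^j$. The key pointwise bound is that on this annulus one has $|\xi|^{2\alpha}\sim 2^{2\alpha j}$, so the semigroup contracts at rate $e^{-ct2^{2\alpha j}}$. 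First I would write, by Minkowski's inequality in the time integral,
$$
\|\Delta_j \mathcal{L}_\alpha(f)(t)\|_{L^2_x}\le \int_0^t \|\Delta_j S_\alpha(t-t')f(t')\|_{L^2_x}\,dt'\lesssim \int_0^t e^{-c(t-t')2^{2\alpha j}}\|\Delta_j f(t')\|_{L^2_x}\,dt',
$$
and then take the $L^1_t([0,T])$ norm. Using Young's convolution inequality $\|g_1 * g_2\|_{L^1}\le \|g_1\|_{L^1}\|g_2\|_{L^1}$ with $g_1(\tau)=e^{-c\tau 2^{2\alpha j}}\mathbf 1_{[0,\infty)}$ gives
$$
\|\Delta_j \mathcal{L}_\alpha(f)\|_{L^1_TL^2_x}\lesssim 2^{-2\alpha j}\|\Delta_j f\|_{L^1_TL^2_x},
$$
for $j\ge 0$. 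Multiplying by $2^{(s+2\alpha)j}$ exactly cancels the gain $2^{-2\alpha j}$ and leaves the weight $2^{sj}$, so after raising to the power $q$ and summing we get the first part bounded by $\|f\|_{\tilde L^1_T B^{s,q}_2}$.

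For the time-derivative term I would use $\partial_t \mathcal{L}_\alpha(f)(t)=f(t)+D_x^{2\alpha}\mathcal{L}_\alpha(f)(t)$ (differentiating the Duhamel integral, the boundary term gives $f(t)$ since $S_\alpha(0)=\mathrm{Id}$, and the interior derivative gives $D_x^{2\alpha}$ acting on the integral). Applying $\Delta_j$ and using $\|\Delta_j D_x^{2\alpha} v\|_{L^2_x}\lesssim 2^{2\alpha j}\|\Delta_j v\|_{L^2_x}$ together with the bound just obtained, we find
$$
\|\Delta_j \partial_t\mathcal{L}_\alpha(f)\|_{L^1_TL^2_x}\lesssim \|\Delta_j f\|_{L^1_TL^2_x}+2^{2\alpha j}\|\Delta_j \mathcal{L}_\alpha(f)\|_{L^1_TL^2_x}\lesssim \|\Delta_j f\|_{L^1_TL^2_x},
$$
again for $j\ge 0$. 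Multiplying by $2^{sj}$, powering and summing gives the second part, also controlled by $\|f\|_{\tilde L^1_T B^{s,q}_2}$.

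The only genuinely separate case is the low-frequency block $j=-1$, where there is no decay from the semigroup and no room to absorb factors of $2^{2\alpha j}$. There I would simply use $\|\Delta_{-1}S_\alpha(t-t')f(t')\|_{L^2_x}\lesssim \|\Delta_{-1}f(t')\|_{L^2_x}$ (uniform boundedness of the semigroup, or even just that $D_x^{2\alpha}$ is bounded on the fixed compact frequency support of $\Delta_{-1}$), hence $\|\Delta_{-1}\mathcal{L}_\alpha(f)\|_{L^1_TL^2_x}\lesssim T\|\Delta_{-1}f\|_{L^1_TL^2_x}$ after integrating the extra time variable over $[0,T]$, and similarly $\|\Delta_{-1}\partial_t\mathcal{L}_\alpha(f)\|_{L^1_TL^2_x}\lesssim (1+T)\|\Delta_{-1}f\|_{L^1_TL^2_x}$. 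Collecting the $j\ge 0$ sum and the $j=-1$ term, and noting that the bounds for $\mathcal{L}_\alpha(f)$ depend only on $\|f\|_{\tilde L^1_T B^{s,q}_2}$ and not on the $E^{s,q}_{\alpha,T}$-decomposition of $f$, yields the factor $(1+T)$ stated in \eqref{1'}. I expect the main technical point to be the careful bookkeeping of the $j=-1$ piece and the extraction of the clean $(1+T)$ constant; the dyadic estimates for $j\ge 0$ are routine once the exponential-decay convolution bound is set up.
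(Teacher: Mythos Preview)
Your argument is correct and leads to the same dyadic estimates as the paper, but you reach the key inequality
\[
2^{2\alpha j}\|\Delta_j \mathcal{L}_\alpha(f)\|_{L^1_t L^2_x}\lesssim \|\Delta_j f\|_{L^1_t L^2_x}\qquad (j\ge 0)
\]
by a different route. The paper proceeds by an energy method: it multiplies the localized equation $\partial_t u_j - D_x^{2\alpha}u_j=f_j$ by $u_j$, derives the differential inequality
\[
\frac{d}{dt}\|u_j\|_{L^2_x}+2^{2\alpha j}\|u_j\|_{L^2_x}\le \|f_j\|_{L^2_x},
\]
and integrates in time. You instead use the pointwise semigroup decay $\|\Delta_j S_\alpha(\tau)\|_{L^2\to L^2}\lesssim e^{-c\tau 2^{2\alpha j}}$ and Young's convolution inequality in $t$. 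Both are standard and entirely equivalent here; your version is perhaps slightly more transparent since it avoids the case analysis needed to justify the differential inequality when $\|u_j(t)\|_{L^2_x}=0$. The treatment of $\partial_t\mathcal{L}_\alpha(f)$ via the equation and of the block $j=-1$ via uniform boundedness plus a factor $T$ matches the paper exactly.

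One technical point you should make explicit: the norm $\|\cdot\|_{Y^{s,q}_{\alpha,T}}$ is a \emph{restriction} norm, defined as an infimum over extensions to $\R_+^*$, so estimates on $[0,T]$ alone do not immediately bound it. The paper handles this by extending $f$ by zero outside $[0,1]$ and multiplying $\mathcal{L}_\alpha(f)$ by the smooth cutoff $\eta$, then estimating $\eta\,\mathcal{L}_\alpha(f)$ in $Y^{s,q}_\alpha$ over all of $\R_+$. For $j\ge 0$ your Young-inequality argument actually works globally in $t$ once $f$ is extended by zero (since $\int_0^\infty e^{-c\tau 2^{2\alpha j}}\,d\tau<\infty$), but for $j=-1$ the extension $\mathcal{L}_\alpha(\tilde f)$ need not lie in $L^1(\R_+;L^2)$, so a compactly supported time cutoff is genuinely needed there. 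Adding one sentence to this effect would close the gap.
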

\begin{proof}
It suffices to prove  the result for a time extension of $ \mathcal{L}_{\alpha}(f) $. More precisely, it suffices to prove that
$$
\|\eta \mathcal{L}_{\alpha}(f)\|_{Y^{s,q}_{\alpha,T}}\lesssim
\|f\|_{\tilde{L}^1_{t>0} B^{s,q}_2}\; ,
$$
 for any $ f\in \tilde{L}^1_{t>0} B^{s,q}_2 $ supported in time in $ [0,1] $ and where $ \eta \in C_0^\infty(\R) $ is defined in Section \ref{section2} .
Let $u$ be the solution of the Cauchy problem
$$\partial_t  u- D_x^{2\alpha} u =f,\; u(0)=0.$$
It is easy to check that $ u = \mathcal{L}_\alpha(f)$.
Multiplying this equation by $u$ and integrating by parts, we get
$$\frac{1}{2} {d\over dt}\int_\R u^2+\int_\R(D_x^\alpha u)^2=\int_\R fu \; .$$
Applying this equality to localizing in frequencies equation and using Bernstein inequality and the
Cauchy-Schwarz one, we get for any $ j\in \N $,
$$\frac{1}{2} {d\over dt}\int_\R u_j^2+2^{2\alpha j}\int_\R u_j^2\leq \big(\int_\R f_j^2\big)^{1/2}\big(\int_\R u_j^2\big)^{1/2}.$$
Here $u_j=\Delta_j u, \; f_j=\Delta_ j f$. If  $\big(\int_\R
u_j^2\big)^{1/2}\neq 0 $  we divide this last inequality by
$\big(\int_\R u_j^2\big)^{1/2}$ to obtain
$${d\over dt}\Big(\big(\int_\R u_j^2\big)^{1/2}\Big)+2^{2\alpha j}\big(\int_\R u_j^2\big)^{1/2}
\leq \big(\int_\R f_j^2\big)^{1/2}.$$
 On the other hand, the smoothness and non negativity  of $t\mapsto \|u_j(t)\|_{L^2_x}^2 $ forces $ \frac{d}{dt} \|u_j(t)\|^2_{L^2_x}=0 $ as soon as $ \|u_j(t)\|_{L^2_x} = 0 $. This ensures that  the above differential inequality is actually valid for all $ t>0 $.  Integrating this
differential inequality in time we get for any $ j\in \N $,
\begin{equation}
\ 2^{2\alpha j} \|u_j\|_{L^1_tL^2_x}\lesssim \|f_j\|_{L^1_tL^2_x}.
\end{equation}
Now, in the case $ j=-1$, we get in the same way $ \|\Delta_{-1} u
\|_{L^2} \lesssim  \|\Delta_{-1} f\|_{L^1_tL^2_x} $. Integrating on
$ [0,2T]$
 this leads to $  \|\eta   \Delta_{-1} u \|_{L^1_t L^2_x}\lesssim  T \|f_j\|_{L^1_tL^2_x} $. \\
Finally, in view of  the linear fractional heat equation, the  triangle inequality  leads to
\begin{equation}
\label{e} \|\partial_t (\eta u_j)\|_{L^1_tL^2_x}\lesssim \|\eta \partial_t  u_j\|_{L^1_tL^2_x}+ \| u_j\|_{L^1_tL^2_x}\lesssim
\|f_j\|_{L^1_tL^2_x}.
\end{equation}
Since $u=\mathcal{L}_\alpha(f)$, summing in $ j\in \N $ using Bernstein inequalities  and recalling  the expression of the norm in $Y^{s,\alpha}$, we
 conclude that
\begin{equation}
\label{1'-noncomplete} \|\eta
\mathcal{L}_{\alpha}(f)\|_{ Y^{s,q}_{\alpha,T}}\lesssim
\|f\|_{\tilde{L}^1_t B^{s,q}_x}\; .
\end{equation}
\end{proof}
\begin{lem} \label{popo}
Let $ 0<T\le 1  $ and $ u\in Y^{s,q}_{\alpha,T} $. Then  it holds
\begin{equation}
\|u\|_{X^{s,q}_{\alpha,T}}=\|u\|_{\tilde{L}^\infty_T B_2^{s,q}}+ \|u\|_{\tilde{L}^2_T B^{s+\alpha,q}_2} \lesssim  \|u\|_{ Y^{s,q}_{\alpha,T}} \; .
\end{equation}
In particular, $ E^{s,q}_{\alpha,T} \hookrightarrow X^{s,q}_{\alpha,T} $.
\end{lem}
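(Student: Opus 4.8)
The plan is to deduce everything from a single frequency-localized interpolation estimate, performed on a time extension of $u$ to all of $\R_+^*$. Since the $X^{s,q}_{\alpha,T}$-norm only sees $t\in\,]0,T[$ and, by definition, $\|u\|_{Y^{s,q}_{\alpha,T}}=\inf\{\|v\|_{Y^{s,q}_\alpha}:v\equiv u\text{ on }]0,T[\}$, it will suffice to prove
$\|v\|_{\tilde L^\infty_{t>0}B^{s,q}_2}+\|v\|_{\tilde L^2_{t>0}B^{s+\alpha,q}_2}\lesssim\|v\|_{Y^{s,q}_\alpha}$
for every $v\in Y^{s,q}_\alpha$, and then restrict to $]0,T[$ and let $v$ run to the infimum. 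Fixing $j\ge-1$ and writing $v_j=\Delta_jv$, $g(t)=\|v_j(t)\|_{L^2_x}$, the finiteness of $\|v\|_{Y^{s,q}_\alpha}$ forces, for this $j$, both $v_j$ and $\partial_tv_j=\Delta_j\partial_tv$ to lie in $L^1(\R_+^*;L^2_x)$; hence $t\mapsto v_j(t)$ is absolutely continuous into $L^2_x$, the scalar function $g$ is absolutely continuous with $g,g'\in L^1(\R_+^*)$, and $|g'(t)|\le\|\partial_tv_j(t)\|_{L^2_x}$ for a.e. $t$.

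The elementary fact I would isolate is that a nonnegative absolutely continuous $g$ with $g,g'\in L^1(\R_+^*)$ satisfies $\|g\|_{L^\infty(\R_+^*)}\le\|g'\|_{L^1(\R_+^*)}$: integrability of $g$ gives $\liminf_{t\to\infty}g(t)=0$, and since $g'\in L^1$ the full limit at $+\infty$ exists, hence equals $0$, so $g(t)=-\int_t^\infty g'$. Applied to $v_j$ this gives $\|v_j\|_{L^\infty_tL^2_x}\le\|\partial_tv_j\|_{L^1_tL^2_x}$, which after multiplying by $2^{sj}$ and taking the $\ell^q_j$-norm already yields $\|v\|_{\tilde L^\infty_tB^{s,q}_2}\le\|\partial_tv\|_{\tilde L^1_tB^{s,q}_2}$. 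For the parabolic-gain part I would interpolate in time: $\|v_j\|_{L^2_tL^2_x}^2=\int g^2\le\|g\|_{L^\infty}\|g\|_{L^1}\le\|\partial_tv_j\|_{L^1_tL^2_x}\|v_j\|_{L^1_tL^2_x}$; multiplying by $2^{2(s+\alpha)j}=2^{sj}\,2^{(s+2\alpha)j}$, taking the square root and using $\sqrt{ab}\le\tfrac12(a+b)$ gives
$$2^{(s+\alpha)j}\|v_j\|_{L^2_tL^2_x}\le\tfrac12\Big(2^{sj}\|\partial_tv_j\|_{L^1_tL^2_x}+2^{(s+2\alpha)j}\|v_j\|_{L^1_tL^2_x}\Big).$$
Raising to the power $q$, using $(a+b)^q\lesssim_q a^q+b^q$ and summing over $j\ge-1$ then gives $\|v\|_{\tilde L^2_tB^{s+\alpha,q}_2}\lesssim\|\partial_tv\|_{\tilde L^1_tB^{s,q}_2}+\|v\|_{\tilde L^1_tB^{s+2\alpha,q}_2}=\|v\|_{Y^{s,q}_\alpha}$.

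Adding the two bounds, restricting to $]0,T[$ and passing to the infimum over time extensions yields $\|u\|_{X^{s,q}_{\alpha,T}}\lesssim\|u\|_{Y^{s,q}_{\alpha,T}}$ (the first equality in the statement being merely the definition of the $X^{s,q}_{\alpha,T}$-norm). For the inclusion $E^{s,q}_{\alpha,T}\hookrightarrow X^{s,q}_{\alpha,T}$ I would take any decomposition $u=v+w$ with $v\in X^{s,q}_{\alpha,T}$, $w\in Y^{s,q}_{\alpha,T}$, bound $\|u\|_{X^{s,q}_{\alpha,T}}\le\|v\|_{X^{s,q}_{\alpha,T}}+\|w\|_{X^{s,q}_{\alpha,T}}\lesssim\|v\|_{X^{s,q}_{\alpha,T}}+\|w\|_{Y^{s,q}_{\alpha,T}}$ using the part just proved, and take the infimum over all such decompositions. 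The argument is soft: the only points requiring care are the justification that $t\mapsto\|\Delta_jv(t)\|_{L^2_x}$ is absolutely continuous so that the fundamental theorem of calculus applies, and the observation that it is the $L^1$-in-time integrability of $v$ and $\partial_tv$ — not the smallness of $T$ — that makes the boundary term at $+\infty$ disappear. I expect no genuine analytic obstacle; the most error-prone part is the bookkeeping with the dyadic $\ell^q$-sums and the two infima defining $Y^{s,q}_{\alpha,T}$ and $E^{s,q}_{\alpha,T}$.
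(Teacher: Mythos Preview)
Your proof is correct and follows essentially the same approach as the paper: reduce to the non-restricted spaces, frequency-localize, use $v_j(t)=-\int_t^\infty\partial_t v_j$ (equivalently, $g(t)\to 0$) for the $L^\infty_t$-bound, and an $L^1$--$L^1$ product bound for the $L^2_t$-norm, then reassemble the dyadic sums. The only cosmetic difference is that you work with the scalar $g(t)=\|v_j(t)\|_{L^2_x}$ and use $\int g^2\le\|g\|_{L^\infty}\|g\|_{L^1}$, whereas the paper keeps the vector-valued $v_j$ and reaches the same inequality via a pointwise-in-$x$ identity plus Cauchy--Schwarz and Minkowski; your route is slightly cleaner but not a genuinely different argument.
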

\begin{proof}
Again it suffices to prove this estimate for the non restriction spaces. Actually, by localizing in space frequencies it suffices to prove that for any function
 $ u\in L^1(\R_+;L^2(\R)) $ with $ u_t \in  L^1(\R_+; L^2(\R)) $ it holds
 \begin{equation}\label{fd}
 \|u\|_{L^\infty_t L^2_x} \lesssim \|u_t \|_{L^1_t L^2_x} \quad \mbox{ and  } \quad
 \| u\|_{L^2_{t>0} L^2_x}^2 \lesssim \| u\|_{L^1_{t>0}L^2_x}  \| u_t\|_{L^1_{t>0}L^2_x}\; .
 \end{equation}
 Indeed, applying \eqref{fd} to the space frequency localization $ u_j $ of $ u $, Bernstein's inequalities lead to
 $$
  2^{js}\|u_j\|_{L^\infty_t L^2_x} \lesssim 2^{js} \|\partial_t  u_j \|_{L^1_t L^2_x} \quad \mbox{ and  } \quad
  2^{j q(s+\alpha)}\| u_j\|_{L^2_{t>0} L^2_x}^q \lesssim 2^{j q (s+2\alpha)/2} \| u_j\|_{L^1_{t>0}L^2_x}^{q/2}
   2^{j  q s/2} \| \partial_t u_j \|_{L^1_{t>0}L^2_x}^{q/2}\; ,
  $$
  which yields the result by summing in $ j $ and applying Cauchy-Schwarz in $ j $ on the right-hand member of the second inequalities.\\
  Let us now prove \eqref{fd}. The first part is a direct consequence of the equality $ u(t) =-\int_t^\infty u_t(s) ds $
   and Minkowsky integral inequality. To prove the second part we notice that  $ u^2(t) = - u(t)\int_t^\infty u_t(s) ds $ so that  we can write
  \begin{eqnarray*}
  \int_0^\infty \int_{\R} u^2(t,x)\, dx\, dt & = & \int_0^\infty \int_{\R} u(t,x) \int_0^t u_t(s,x) \, ds \, dx \, dt \\
   & \lesssim & \int_{\R} \int_0^\infty |u(t,x)| \, dt\, \int_0^\infty  |u_t(t,x)| \, dt \, dx \\
     & \lesssim & \|\int_0^\infty |u(t,\cdot)| \, dt \|_{L^2_x}  \|\int_0^\infty |u_t(t,\cdot)| \, dt \|_{L^2_x} \\
     & \lesssim &  \| u\|_{L^1_{t>0}L^2_x}  \|
     u_t\|_{L^1_{t>0}L^2_x},
  \end{eqnarray*}
where we used Minkowsky integral inequality in the last step.
\end{proof}
\section{Well-posedness for $ 1/2<\alpha\le 1$}\label{Well} \setcounter{equation}{0}
According to Lemma  \ref{p3} we easily get for $ 0<T\le 1$, 
\begin{eqnarray}
\|\mathcal{L}_{\alpha}(u^2)\|_{E_{\alpha,T}^{s,q}}  \lesssim
\|\mathcal{L}_{\alpha}(u^2)\|_{Y^{s,q}_{\alpha,T}}
& \lesssim &\Bigl( \sum_{j} 2^{j s  q} \|\Delta_j (u^2) \|_{L^1_T L^2_x}^q \Bigr)^{1/q}\nonumber \\
& \lesssim & \Bigl( \sum_{j} 2^{j q (s+1/2)} \|\Delta_j (u^2) \|_{L^1_T L^1_x}^q \Bigr)^{1/q}
 \; . \label{toto1}
\end{eqnarray}
Now, by para-product decomposition we have
$$
 \|\Delta_j (u^2) \|_{L^1_T L^1_x}\lesssim \| u\|_{L^2_{T,x}} \|\Delta_j u\|_{L^2_{T,x}} +\sum_{|k-k'|\le 3, \, k\gtrsim j} \|\Delta_{k}  u\|_{L^2_{T,x}}
  \|\Delta_{k'}  u\|_{L^2_{T,x}} \; .
  $$
  The contribution to \eqref{toto1} of the first term of the above right-hand side member can be estimated by
  $$
  \|u\|_{L^2_{T,x}} \Bigl( \sum_{j} 2^{j q (s+1/2)} \|\Delta_j u \|_{L^2_T L^2_x}^q \Bigr)^{1/q} = \|u\|_{L^2_{T,x}}\| u \|_{\tilde{L}^2_T B^{s+1/2,q}_2}
  $$
  which is acceptable as soon as $ \alpha\ge 1/2 $ and ($ s> -\alpha $  or  $ s=-\alpha $ and $1\le q\le 2 $ ). Indeed, this  last condition ensures that
  $ \|u\|_{L^2_{T,x}} \lesssim \| u \|_{\tilde{L}^2_T B^{s+\alpha,q}_2} $. For the second  term,
 we notice that  for $ \alpha>1/2 $, we can estimate its contribution   by
  $$
  \| u \|_{\tilde{L}^2_T B^{0,\infty}_2}    \| u \|_{\tilde{L}^2_T B^{s+\alpha,q}_2} \Bigl( \sum_{j} 2^{jq (1/2-\alpha)}\Bigr)^{1/q} \le  C(\alpha)\, 
   \| u \|_{\tilde{L}^2_T B^{0,\infty}_2}  \| u \|_{\tilde{L}^2_T B^{s+\alpha,q}_2} \; ,
  $$
  where $ C(\alpha)>0 $ only depends on $ \alpha>1/2$.\\
 In view of Lemma \ref{popo}, this proves that for $ \alpha> 1/2 $, 
  \begin{equation}
 \label{toto}
  \|\mathcal{L}_{\alpha}(u^2)\|_{E_{\alpha,T}^{s,q}} \lesssim    \| u \|_{\tilde{L}^2_{Tx}}  \| u \|_{\tilde{L}^2_T B^{s+\alpha,q}_2}
  \lesssim \|u\|_{E^{-\alpha,2}_{\alpha,T}}\|u\|_{E^{s,q}_{\alpha,T}} \; ,
  \end{equation}
  where the implicit constants only depends on $ \alpha$.
     In the same way, for any $ \alpha\in ]1/2,1] $, there exists $ C_\alpha>0 $ such that 
   \begin{eqnarray}  
  \|\mathcal{L}_{\alpha}(u v)\|_{E_T^{s,\alpha}} & \lesssim &    \| u \|_{\tilde{L}^2_{Tx}}  \| v \|_{\tilde{L}^2_T B^{s+\alpha,q}_2}
  +  \| v \|_{\tilde{L}^2_{Tx}}  \| u \|_{\tilde{L}^2_T B^{s+\alpha,q}_2} \nonumber \\
  & \le  & C_\alpha \Bigl(  \|u\|_{E^{-\alpha,2}_{\alpha,T}}\|v\|_{E^{s,q}_{\alpha,T}}
 + \|v\|_{E^{-\alpha,2}_{\alpha,T}}\|u\|_{E^{s,q}_{\alpha,T}} \Bigr) \; . \label{toto2}
  \end{eqnarray}
  Let us now  fixed $ \alpha\in ]1/2,1] $.  \eqref{toto2} together with \eqref{2} lead to  the existence of $ \beta>0 $ such that for all
   $ u_0\in B^{-\alpha,2}_2(\R) $ with
 \begin{equation} \label{yy}
  \|u_0\|_{B^{-\alpha,2}_2(\R)}\le \beta\; ,
  \end{equation}
   the mapping
 $$
 u \mapsto S_{\alpha}(\cdot)u_0 + {\mathcal L}_{\alpha}(u^2)
 $$
 is a strict contraction in the  ball of $ E^{-\alpha,2}_{\alpha,1} $  centered at the origin of radius $ (2C_\alpha)^{-1} $. 
   Noticing that $E^{s,q}_{\alpha,1} \hookrightarrow E^{-\alpha,2}_{\alpha,1}$  as soon as 
    \begin{equation}\label{cond}
  ( s\ge -\alpha \mbox{ and  }  1\le q\le 2)\;  \mbox{ or }  \;  s>-\alpha ,
   \end{equation}
 this ensures that the above mapping is also strictly contractive is a small ball of $E^{s,q}_{\alpha,T} $ as soon as
   \eqref{cond}-\eqref{yy}Ê are satisfied.
 Since $S_\alpha $ is a continuous semi-group in $ B^{s,q}_2(\R) $ and  according to Lemma \ref{popo},
 $ E^{s,q}_{\alpha,1}\hookrightarrow \tilde{L}^\infty_1 B^{s,q}_2 $, this leads to the well-posedness result in $ B^{s,q}_2(\R) $  under conditions \eqref{cond}
 for initial data  satisfying \eqref{yy}. The result for general
 initial data follows
  by a simple dilation argument. Indeed, the equation \eqref{NLH} is invariant under
  the dilation $
   u(t,x) \mapsto u_\lambda(t,x)=\lambda^{2\alpha} u (\lambda^{2\alpha} t,\lambda x) $
   whereas $\|\lambda^{2\alpha} u_0(\lambda \cdot) \|_{B^{-\alpha,2}_2(\R)}
    \le \lambda^{\alpha-1/2} \|u_0\|_{B^{-\alpha,2}_2(\R)} \to 0 $ as $ \lambda\searrow 0$.
    Classical arguments then lead to the well-posedness result
    in $ B^{s,q}_2(\R)$ for arbitrary large initial data with a minimal time of existence
    $ T\sim \big(1+\|u_0\|_{B^{-\alpha,2}_2(\R)}\big)^{-\frac{4\alpha}{2\alpha-1}}$.
    Note that,  the well-posedness being obtained by a fixed point argument,
    as a by-product we get that the solution-map : $ u_0 \mapsto u $
     is real analytic from $ B^{s,q}_2(\R) $ into $ C\big([0,T]; B^{s,q}_2(\R)\big).$\\

   \section{Ill-posedness results for $1/2<\alpha\le 1$.}

In this section we prove
  discontinuity results on the flow map $u_0 \mapsto
u(t)$ for any fixed $t>0$ less than some $T>0.$ To clarified the presentation we separate
the case
$ s<-\alpha $ and the case  $ s=-\alpha $ and $ q>2 $.

\subsection{The case $s<-\alpha$}
We take the counter example of \cite{MV} used
for the KdV-Burgers equation.

We define the sequence of initial data $\{\phi_N\}_{N\geq 1}\subset
C^\infty(\R) $ via its Fourier transform by
\begin{equation}\label{deftphij}
\hat{\phi}_N(\xi)=N^{\alpha}\Big(\chi_{I_N}(\xi)+\chi_{I_N}(-\xi)\Big),
\end{equation}
where $I_N=[N, N+2]$ and $\chi_{I_N}$ is the characteristic function
of the interval $I_N,$ $$\chi_{I_N}(\xi)= \begin{cases}
1& \text{ if } \xi \in I_N,\\
0& \text{ if } \xi \not \in I_N.
\end{cases}$$
That is
$$\phi_N(x)= \begin{cases}
{N^{\alpha}\over \pi}{\sin(x)\over  x}\cos\big[(N+1)x\big]& \text{ if } x \not=0,\\
{N^{\alpha}\over \pi}& \text{ if } x=0.
\end{cases}$$
Clearly $\phi_N\in C_0(\R):=\Big\{f\in C(\R)| \lim_{|x|\to
\infty}f(x)=0\Big\}.$

 For any $ (s,q) \in \R\times [1,+\infty] $ we have
$\|\phi_N\|_{B^{s,q}_2(\R)}\sim N^{\alpha+s}  $
 and thus  $\|\phi_N\|_{B^{-\alpha,q}_2(\R)}\sim 1 $ whereas
$\phi_N\to 0$ in $B^{s,q}_2(\R)$,  for $s<-\alpha.$

Let us consider the following bilinear operator, closely related to second iteration of the Picard scheme,
$$A_2(t,h,h)=2 \int_0^tS_{\alpha}(t-t')[S_{\alpha}(t')h]^2dt',$$
where $S_{\alpha}$ is the semi-group of the linear heat equation.
Let us denote by ${\mathcal F}_x$ the partial Fourier transform with
respect to $x.$ Recall that $${\mathcal
F}_x\big(S_{\alpha}(t)\varphi\big)
(\xi)=e^{-t[\xi|^{2\alpha}}{\mathcal F}_x(\varphi)(\xi), \forall\,
\varphi\in {\mathcal S}'(\R),$$ and ${\mathcal F}_x(fg)={\mathcal
F}_x(f)\star{\mathcal F}_x(g),$ where $\star$ is the convolution
product.

It follows that

\begin{eqnarray}
{\mathcal
F}_x\Big(A_2(t,\phi_N,\phi_N)\Big)(\xi)&=&2\int_0^te^{-(t-t')|\xi|^{2\alpha}}\Big(\int_\R
e^{-t'|\xi_1|^{2\alpha}}\hat{\phi}_N(\xi_1)e^{-t'|\xi-\xi_1|^{2\alpha}}\hat{\phi}_N(\xi-\xi_1)d\xi_1\Big)dt'\nonumber\\
&=&2 \int_\R\hat{\phi}_N(\xi_1)\hat{\phi}_N(\xi-\xi_1)\Big(\int_0^t
e^{-(t-t')|\xi|^{2\alpha}}e^{-[|\xi_1|^{2\alpha}+|\xi-\xi_1|^{2\alpha}]t'}dt'\Big)d\xi_1 \nonumber\\
&=& 2\int_\R \hat{\phi}_N(\xi_1)\hat{\phi}_N(\xi-\xi_1)
\Big({e^{-[|\xi_1|^{2\alpha}+ |\xi-\xi_1|^{2\alpha}]t
}-e^{-|\xi|^{2\alpha}t}\over \Theta_\alpha(\xi,\xi_1)}\Bigr)d\xi_1, \label{za}
\end{eqnarray}
where
$$
\Theta_\alpha(\xi,\xi_1)=|\xi|^{2\alpha}-|\xi_1|^{2\alpha}-|\xi-\xi_1|^{2\alpha}\; .
$$
Note that the integrand is nonnegative. In particular, ${\mathcal
F}_x\Big(A_2(t,\phi_N,\phi_N)\Big)(\xi)=|{\mathcal
F}_x\big(A_2(t,\phi_N,\phi_N)\big)(\xi)|.$ Let
$$K_1^N(\xi)=\Big\{\xi_1\;|\; (\xi-\xi_1, \xi_1)\in I_N\times I_N \mbox{ or }   (\xi-\xi_1, \xi_1)\in I_{-N}\times I_{-N}
\Big\}
$$ and
$$K_2^N(\xi)=\Big\{\xi_1\;|\; (\xi-\xi_1, \xi_1)\in I_N\times I_{-N} \mbox{ or }   (\xi-\xi_1, \xi_1)\in I_{-N}\times I_{N}
\Big\}$$
For any $ |\xi |\le \frac{1}{2}$, $ K_1^N(\xi) =\emptyset $ and thus
\begin{eqnarray*}
{\mathcal F}_x\Big(A_2(t,\phi_N,\phi_N)\Big)(\xi)&=&2\int_{K_2^N(\xi)}
\hat{\phi}_N(\xi_1)\hat{\phi}_N(\xi-\xi_1)
\Big({e^{-[|\xi_1|^{2\alpha}+ |\xi-\xi_1|^{2\alpha}]t }-e^{-|\xi|^{2\alpha}t}\over
\Theta_\alpha(\xi,\xi_1)}\Bigr)d\xi_1 \; .
\end{eqnarray*}

On the other hand, for any $(a,b) \in \R_+\times \R_- $ one has obviously,
$$
 |a|^{2\alpha} +|b|^{2\alpha}-|a+b|^{2\alpha}\ge  \big(|a|\wedge
 |b|\big)^{2\alpha}.
 $$
Moreover, it is easy to check that
  $|K_2^N(\xi)|\geq 1 $ and that  in  $K_2^N(\xi)$ it holds $
   N^{2\alpha}\le |\Theta_\alpha(\xi,\xi_1)|\le 2 (N+2)^{2\alpha}$ Hence, fixing $ t\in ]0,1[$, it holds
\begin{eqnarray*}
{\mathcal F}_x\Big(A_2(t,\phi_N,\phi_N)\Big)(\xi)&\ge &
e^{-t/2}
N^{2\alpha}{1-e^{-N^{2\alpha}t}\over 2(N+2)^{2\alpha}}\, \ge \,  {1\over 4}e^{-t/2}, \quad \forall \xi\in [-1/2,1/2] ,
\end{eqnarray*}
 for any $ N >0 $ large enough. This ensures that for any fixed $ (s,q)\in \R \times [1,+\infty] $ and any fixed $t\in ]0,1[ $,
 \begin{equation} \label{estB}
 \| A_2(t,\phi_N,\phi_N)\|_{B^{s,q}_2} \ge  {1\over 4}e^{-t/2}
 \end{equation}
 for $ N>0 $ large enough. Taking $ s<-\alpha $ this proves the discontinuity of the map $ u_0 \mapsto u(t) $ in $ B^{s,q}_2$.  To prove the discontinuity with value in ${\mathcal D}'(\R)$, we proceed as follows.
Let $g\in {\mathcal S}(\R)$ be such that  $ \hat{g}$ is positive equal
to $1$ on $[-1/4,1/4]$ and  supported in $[-1/2,1/2].$ We obtain for $ N >0 $ large enough,

$$|\int_{\R} A_2(t,\phi_N,\phi_N)(x)g(x) dx| \ge \frac{1}{8}e^{-t/4}.$$
On the other hand the analytical well-posedness ensures that $ A_2(t,\phi_N,\phi_N) $ is bounded in $ B^{-\alpha,1}_2 $ uniformly in $ N $.
Then, since ${\mathcal D}(\R)$  is dense in ${\mathcal S}(\R),$
there exists $\varphi\in {\mathcal D}(\R)$ such that
 \begin{equation} \label{estB2}
\Bigl|\int_{\R} A_2(t,\phi_N,\phi_N)(x)\varphi(x) dx\Bigr| \ge
\frac{1}{2^4}e^{-t/4}.
 \end{equation}

This shows that $A_2(t,\phi_N;\phi_N)$ does not converge to $ 0 $ in
${\mathcal D}'(\R)$ and proves the discontinuity from
$B^{s,q}_2(\R), s<-\alpha$ into ${\mathcal D}'(\R).$

We now turn to prove the discontinuity of the flow-map
$$\begin{array}{ccccc}
u(t,\cdot) & : & B^{s,q}_2(\R) & \longrightarrow & B^{s,q}_2(\R) \\
& & h & \longmapsto & u(t,h)= S_{\alpha}(t)h + \int_0^t S_{\alpha}(t-\sigma)
 \big(u^2(\sigma)\big) d\sigma.\\
\end{array}$$
By the theorem of well posedness, there exist $T>0$ and
$\epsilon_0>0$ such that for any $0<\epsilon\leq \epsilon_0,$
$\|h\|_{B^{-\alpha,1}_2}\leq 1$ and $0\leq t\leq T,$
$$u(t,\epsilon h)=\epsilon
S_{\alpha}(t)h+\sum_{k=2}^{\infty}\epsilon^kA_k(t,h^k),$$ where
$h^k=(h,\cdots, h),\; h^k \mapsto A_k(t,h^k)$ are $k-$linear
continuous maps from $(B^{-\alpha,1}_2(\R))^k$ into $C([0,T];
B^{-\alpha,1}_2(\R))$ and the series converges absolutely in
$C([0,T]; B^{-\alpha,1}_2(\R)).$

Hence
$$u(t,\epsilon \phi_N)-\epsilon^2A_2(t,\phi_N,\phi_N)= \epsilon
S_{\alpha}(t)\phi_N+\sum_{k=3}^{\infty}\epsilon^kA_k(t,\phi_N^k).$$ Using the
inequalities
$$\|S_{\alpha}(t)\phi_N\|_{B^{s,1}_2(\R)}\leq \|\phi_N\|_{B^{s,1}_2(\R)}\leq 2N^{s+\alpha}$$
and
$$\Big\|\sum_{k=3}^{\infty}\epsilon^k A_k(t,\phi_N^k)\Big\|_{B^{-\alpha,1}_2(\R)}
\leq \Big({\epsilon\over \epsilon_0}\Big)^3
\Big\|\sum_{k=3}^{\infty}\epsilon_0^kA_k(t,\phi_N^k)\Big\|_{B^{-\alpha,1}_2(\R)}\leq
C\epsilon^3,$$ where $C$ is a positive constant,  we deduce that for $ s\le -\alpha $,
\begin{equation}\label{td}
\sup_{t\in[0,T]}\|u(t,\epsilon
\phi_N)-\epsilon^2A_2(t,\phi_N,\phi_N)\|_{B^{s,1}_2(\R)}\leq
C\epsilon^3+2\epsilon_0N^{s+\alpha}.
\end{equation}

According to  (\ref{estB}) this leads, for $\epsilon\leq {C^{-1}
e^{-t/4}\over 2^5}$, to
$$\|u(t,\epsilon
\phi_N)\|_{B^{s,q}_2(\R)}\geq C_0\epsilon^2/2-2\epsilon_0N^{s+\alpha}.$$ By
letting $N\to \infty$ we obtain the discontinuity result  since $u(t,0)=0$
and $\phi_N\to 0$ in $B^{s,q}_2(\R) $ for $ s<-\alpha$. The discontinuity of the flow-map  from
$B^{s,q}_2(\R)$ into ${\mathcal D}'(\R) $ follows in the same way by combining \eqref{estB2} and \eqref{td}.
\subsection{The case $s=-\alpha$ and $ q>2$}
This case is similar to the precedent except that we have to change  a little the  sequence of initial data. Here we take the same sequence as in the work of Iwabuchi and Ogawa \cite{IO}.
For any $ N \ge 10  $ we define
$$
\psi_N=N^{-{1\over 2}} \sum_{N\le j \le 2N } \phi_{2^j} \; .
$$
where $ \phi_{2^j} $ is defined in \eqref{deftphij}. \\
Noticing that $ \Delta_k \phi_{2^j} = \delta_{k,j} \phi_{2^j} $, we can easily check that
$$
\|\psi_N \|_{B^{-\alpha,q}_2} \sim N^{-{1\over 2} +{1\over q}} \; .
$$
In particular, $ \|\psi_N \|_{B^{\alpha,q}_2}\to 0 $ for any $ q>2 $ whereas $ \|\psi_N \|_{B^{-\alpha,2}_2}=\|\psi_N \|_{H^{-\alpha}}\sim 1 $. Since the equation is analytically well-posed in $ H^{-\alpha}(\R) $, in view of the preceding case, it suffices to prove that $ A_2(\psi_N,\psi_N ,t) $ does not  tend to $ 0 $ in $ {\mathcal D}' $. By the localization, it holds $ \phi_{2^j} \star \phi_{2^{j'}} \equiv 0 $ on $ ]-1/2,1/2[ $ as soon as $ j\neq j' \ge 10$  and the same reasons as above lead to
\begin{eqnarray*}
{\mathcal F}_x\Big(A_2(t,\psi_N,\psi_N)\Big)(\xi)&=& N^{-{1\over 2}}\sum_{N\le j\le 2N} \int_{K_2^{2^j}(\xi)}
\hat{\phi}_{2^j}(\xi_1)\hat{\phi}_{2^j}(\xi-\xi_1)
\Big({e^{-[|\xi_1|^{2\alpha}+ |\xi-\xi_1|^{2\alpha}]t }-e^{-|\xi|^{2\alpha}t}\over
\Theta_\alpha(\xi,\xi_1)}\Bigr)d\xi_1\\
&\ge & N^{-{1\over 2}} e^{-t/2} N^{{1\over 2}}
N^{2\alpha}{1-e^{-N^{2 \alpha }t}\over 2(N+2)^{2\alpha}}\, \ge \,
{1\over 4}e^{-t/2}, \quad \forall \xi\in [-1/2,1/2] ,
\end{eqnarray*}
 for any $ N >0 $ large enough. This completes the proof of the ill-posedness results for $ 1/2<\alpha\le 1$.

\section{Further remarks}\label{periodic} \setcounter{equation}{0}
\subsection{Wellposedness results in the case $ 0<\alpha\le 1/2$}
In this case we only consider the well-posedness results in the Sobolev spaces $ H^s(\R) $.
We prove by standard parabolic methods that one can reach the dilation critical
Sobolev exponant $ s_c=1/2-2\alpha $ except in the case $ \alpha=1/2 $ where
$ 1/2-2\alpha=-\alpha $.
 See for instance \cite{R2}, \cite{G} or \cite{W} for the same kind of results
 in the case $ \alpha=1$.
\begin{To}
\label{wellstandrd} Let  $(\alpha,s) \in \R^2$ be such that   $\alpha\in (0,1/2]$ and
$s\ge 1/2-2\alpha $ with $ s>-\alpha $.
Then the Cauchy problem \eqref{NLH} is locally well-posed in $ H^s(\R) $.
\end{To}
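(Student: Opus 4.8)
The plan is to solve the integral equation \eqref{NLHint} by a Banach fixed point argument, working in a space that combines $C([0,T];H^s(\R))$ with an auxiliary weighted Lebesgue norm $\sup_{0<t<T}t^{\theta}\|u(t)\|_{L^p(\R)}$, where $p\in[2,\infty)$ and $\theta\ge 0$ are to be chosen, depending on $(\alpha,s)$, in accordance with the scaling $u(t,x)\mapsto \lambda^{2\alpha}u(\lambda^{2\alpha}t,\lambda x)$. The analytic input is the set of smoothing bounds for the semigroup $S_\alpha(t)$, whose Fourier symbol is $e^{-t|\xi|^{2\alpha}}$: its kernel is, for $0<\alpha\le 1$, a nonnegative $L^1$-normalized self-similar profile (a stable density), so that $\|S_\alpha(t)\varphi\|_{L^q}\lesssim t^{-\frac{1}{2\alpha}(\frac1p-\frac1q)}\|\varphi\|_{L^p}$ for $1\le p\le q\le\infty$ and $\|D_x^{\sigma}S_\alpha(t)\varphi\|_{L^2}\lesssim t^{-\sigma/(2\alpha)}\|\varphi\|_{L^2}$ for $\sigma\ge 0$; to these I add the sharp one-dimensional Sobolev embeddings $\dot H^{\sigma}(\R)\hookrightarrow L^r(\R)$ with $\frac1r=\frac12-\sigma$, $0\le\sigma<\frac12$, and their duals $L^{r'}(\R)\hookrightarrow \dot H^{-\sigma}(\R)$. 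In every step one first cuts into low and high frequencies: on the low-frequency part the $H^s$, $\dot H^{\sigma}$ and $L^p$ norms are all equivalent and $S_\alpha$ acts boundedly, so it is harmless, while on the high-frequency part the homogeneous estimates apply as stated.

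For the linear term, $\|S_\alpha(t)u_0\|_{H^s}\le\|u_0\|_{H^s}$ because the symbol is $\le 1$, and choosing $\theta=\frac{1}{2\alpha}\big(\tfrac12-\tfrac1p-s\big)_+$ the smoothing bounds give $\sup_{0<t<T}t^{\theta}\|S_\alpha(t)u_0\|_{L^p}\lesssim\|u_0\|_{H^s}$; moreover this quantity tends to $0$ as $T\to 0$. When $s>s_c:=\tfrac12-2\alpha$ the decay is a positive power of $T$; when $s=s_c$ (which is admissible since $s_c>-\alpha$ for $\alpha<\tfrac12$) it is obtained by density, the bound being $o(1)$ for $u_0$ whose Fourier transform is smooth and compactly supported, and a general datum in $H^{s_c}$ being approximated by such up to an arbitrarily small remainder in $H^{s_c}$.

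For the bilinear term one estimates the product pointwise in time, $\|u^2(\tau)\|_{L^{p/2}}=\|u(\tau)\|_{L^p}^2\lesssim\tau^{-2\theta}\|u\|_{X_T}^2$, and inserts it into the retarded Duhamel operator: for the weighted component use $\|S_\alpha(t-\tau)g\|_{L^p}\lesssim(t-\tau)^{-\frac{1}{2\alpha p}}\|g\|_{L^{p/2}}$, and for the $H^s$-component use $L^{p/2}\hookrightarrow\dot H^{1/2-2/p}$ followed by the $D_x^{\,s-(1/2-2/p)}$-smoothing (or merely $L^2$-boundedness when that exponent is $\le 0$), the low frequencies being controlled separately by Hausdorff--Young. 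In both cases one is reduced to a Beta-type integral $\int_0^t(t-\tau)^{-\beta}\tau^{-2\theta}\,d\tau=c\,t^{1-\beta-2\theta}$, and a direct computation with the above $\theta$ gives $1-\beta-2\theta=\frac{s-s_c}{2\alpha}$, nonnegative exactly when $s\ge s_c$. The remaining conditions for convergence, $\beta<1$ and $2\theta<1$, read $p>\frac{1}{2\alpha}$ and $s>\tfrac12-\tfrac1p-\alpha$; letting $p\downarrow 2$ the second becomes $s>-\alpha$, and a short case check shows that a legitimate $p$ (with $2<p\le 4$, the upper bound required by the dual embedding) exists for all $(\alpha,s)$ in the stated range — the only subtlety being that for small $\alpha$ one would need $p>\tfrac1{2\alpha}>4$, but then $s\ge s_c>\tfrac14$ anyway and one may instead work directly in $C([0,T];H^s)$ using the one-dimensional product law $H^{s}\cdot H^{s}\hookrightarrow H^{2s-1/2}$ (or the algebra property when $s>\tfrac12$). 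In all cases this yields $\|\mathcal L_\alpha(u^2)\|_{X_T}\lesssim T^{(s-s_c)/(2\alpha)}\|u\|_{X_T}^2$ (with the convention $T^0=1$) and the analogous bound for the associated bilinear map $(u,v)\mapsto\mathcal L_\alpha(uv)$.

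Combining the two estimates, the map $u\mapsto S_\alpha(\cdot)u_0\mp\mathcal L_\alpha(u^2)$ is a contraction on a small ball of $X_T$ for $T$ small: in the subcritical case $s>s_c$ because of the $T$-power, and in the critical case $s=s_c$ because the linear term has small $X_T$-norm for $T$ small while the bilinear constant is fixed (equivalently, one first rescales $u_0$ so that its critical norm is small). This produces a unique fixed point, hence local well-posedness in $H^s(\R)$, persistence $u\in C([0,T];H^s(\R))$, and — the solution being obtained by Picard iteration from bounded multilinear operators — real-analytic, in particular continuous, dependence on $u_0$. The main obstacle is not any single estimate but the simultaneous calibration of $(p,\theta)$: one has to verify that the inequalities imposed by the linear estimate, by the convergence of the Beta integral, and by the sign of the power of $T$ admit a common solution for every admissible $(\alpha,s)$ — which is precisely what forces the two borderlines $s=s_c$ and $s=-\alpha$ to appear — together with the treatment of the critical endpoint $s=s_c$, where the contraction must be closed with no power of $T$ to spare.
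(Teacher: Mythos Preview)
Your approach --- Kato iteration in a time-weighted auxiliary space --- is exactly the strategy the paper follows; the only difference is that you take the auxiliary norm in $L^p$ while the paper takes it in $H^{s_0}$ with $s_0>s$. Via $\dot H^{1/2-1/p}\hookrightarrow L^p$ these are morally equivalent, but your $L^p$ formulation has a genuine gap at the critical endpoint $s=s_c=\tfrac12-2\alpha$ when $\alpha\le\tfrac18$.

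The obstruction is the pair of constraints on $p$. The dual embedding $L^{p/2}\hookrightarrow\dot H^{1/2-2/p}$ you use for the $H^s$-component holds only for $2<p\le4$ (for $p>4$ the Sobolev embedding goes the other way), while at $s=s_c$ both the positivity of $\theta$ and the integrability $\tfrac{1}{2\alpha p}<1$ of the Beta integral force $p>\tfrac{1}{2\alpha}$; for $\alpha\le\tfrac18$ these are incompatible. Your stated fallback --- drop the weighted component and close in $C([0,T];H^s)$ via $H^s\cdot H^s\hookrightarrow H^{2s-1/2}$ --- gives
\[
\|\mathcal L_\alpha(u^2)\|_{L^\infty_T H^s}\ \lesssim\ \int_0^T (T-\tau)^{-(1/2-s)/(2\alpha)}\,d\tau\;\|u\|_{L^\infty_T H^s}^2,
\]
and at $s=s_c$ the exponent equals $1$, so the integral diverges; nor is there any smallness mechanism left, since $\|S_\alpha(\cdot)u_0\|_{L^\infty_T H^{s_c}}=\|u_0\|_{H^{s_c}}$. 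So the critical index is not covered when $\alpha\le\tfrac18$, and your sentence ``In all cases this yields $\|\mathcal L_\alpha(u^2)\|_{X_T}\lesssim T^{(s-s_c)/(2\alpha)}\|u\|_{X_T}^2$'' is not justified there.

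The paper's fix is painless: estimate the product by the Sobolev law $\|u^2\|_{H^{2s_0-1/2}}\lesssim\|u\|_{H^{s_0}}^2$ (valid for every $0<s_0<\tfrac12$) rather than through $L^{p/2}$, which removes the $p\le4$ ceiling. With the auxiliary norm $\sup_{t}t^{(s_0-s)/(2\alpha)}\|u(t)\|_{H^{s_0}}$ and any $s_0$ with $s<s_0<\min(s+\alpha,\tfrac12)$, both the linear estimate and the Beta integral converge for all $(\alpha,s)$ in the stated range, and the critical case is closed by exactly the density argument you outline. Replacing $L^p$ by $H^{s_0}$ throughout, your proof goes through unchanged.
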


\begin{proof} The proof is done using a fixed point argument on a suitable metric space.
The case $ s>1/2 $ is trivial since $ H^s(\R) $ is an algebra and
the semi-group $ S_\alpha $ is contractive on $ H^s(\R)$. One can
thus simply perform a fixed point argument in $ C([0,T]; H^s(\R))$
on the Duhamel formula  for a suitable $ T>0 $ related to $
\|u_0\|_{H^s(\R)} $.  The case $ s=1/2 $ is also rather easy and is
postponed  at the end of the proof. So let us assume that
\begin{equation}\label{ra}
1/2-2\alpha \le s<1/2 \mbox{ if }  0<\alpha<1/2 \mbox{ and  } -1/2<s<1/2 \mbox{ if }  \alpha=1/2 \; ,
\end{equation}
that is $\alpha,\; s$ belong to the set
$$
 \Bigl\{ (\alpha,s)\in\R^2 \, |\, 0<\alpha\le 1/2, \; s\ge 1/2-2\alpha \;
 \mbox{and } s>-\alpha \; \Bigr\}.
$$
For $ s$ fixed as above we take $ 0<s_0<1/2 $ such that
$$
0<s_0-s<\alpha\quad \mbox{and } \quad 2s_0-{1\over 2}<s
  \;  .
 $$
  This is obviously possible  for $ \alpha=1/2 $ since $ s>-1/2 $, and  for $ 0<\alpha<1/2 $ since
$ s+1/2>s+\alpha\ge (1/2-2\alpha)+\alpha=1/2-\alpha>0 $. We first establish the existence and uniqueness of a solution of  (\ref{NLHint})  in
$$X_{M,T}:=\Big\{u\in C\big((0,T], H^{s_0}(\R)\big)\; |\; \|u\|_{X_{T}}:=\sup_{t\in(0,T]}t^{{s_0-s\over
2\alpha}}\|u(t)\|_{H^{s_0}(\R)}\leq M\Big\} $$
by proving that the mapping
$$\Lambda_{u_0}(u)(t)= S_{\alpha}(t)u_0 \mp \int_0^t S_{\alpha}(t-\sigma)
\big(u^2(\sigma)\big) d\sigma,$$ is a strict contraction in $
X_{M,T} $ for suitable $M>0,\; T>0$.

From classical regularizing  effects for the fractional heat equation it holds
\begin{equation} \label{effetregularisantHs}
\|S_\alpha(t)f\|_{H^{s_2}(\R)}\leq C t^{-{s_2-s_1\over
2\alpha}}\|f\|_{H^{s_1}(\R)}, \forall \; s_1\leq s_2,\; \forall\;
f\in H^{s_1}(\R).\end{equation} Applying \eqref{effetregularisantHs}
with $ (s_1,s_2)=(s,s_0) $, yields
\begin{equation}\label{linear2}
t^{s_0-s\over 2 \alpha}  \| S_{\alpha}(t) u_0\|_{H^{s_0}(\R)}
\lesssim \|u_0 \|_{H^s(\R)} \; .
\end{equation}
Now, according to \cite{R1}, since $0<s_0<1/2$,  it holds :
\begin{equation} \label{ineprod} \|u v \|_{H^{2s_0-{1\over 2}}(\R)}\leq
C\|u\|_{H^{s_0}(\R)} \|v\|_{H^{s_0}(\R)}, \end{equation} where $C$ is a positive constant.
We thus obtain for any $ t>0 $,
\begin{eqnarray}
t^{s_0-s\over 2 \alpha}  \Bigl\| \int_0^t S_{\alpha}(t-t') u^2(t')
\, dt' \Bigr\|_{H^{s_0}(\R)} & \lesssim &
 t^{s_0-s\over 2 \alpha} \int_0^t  \Bigl\| S_{\alpha}(t-t') u^2(t')  \Bigr\|_{H^{s_0}(\R)} \, dt' \nonumber \\
 & \lesssim &  t^{s_0-s\over 2 \alpha} \int_0^t  (t-t')^{s_0-1/2\over 2\alpha}
 \|u^2(t')\|_{H^{2s_0-1/2}(\R)} \, dt' \nonumber \\
& \lesssim &   t^{s_0-s\over 2 \alpha} \int_0^t  (t-t')^{s_0-1/2\over 2\alpha} \|u(t')\|_{H^{s_0}(\R)}^2 \, dt' \nonumber \\
& \lesssim & \sup_{\tau\in ]0,t[} \Bigl(\tau^{{s_0-s\over
2\alpha}}\|u(\tau)\|_{H^{s_0}(\R)} \Bigr)^2  \; t^{s-(1/2-2\alpha)\over 2\alpha} \int_0^1 (1-\theta)^{2s_0-1\over 4 \alpha} \theta^{s-s_0\over \alpha} \, d\theta \nonumber \\
 & \lesssim &     t^{s-(1/2-2\alpha)\over 2\alpha} \|u\|_{X_t}^2 \; \label{duha}
\end{eqnarray}
 where in the last step we used that $ 0<s_0-s<\alpha $ and that $ s_0>1/2-2\alpha $ since $ s_0>s\ge 1/2-2\alpha $.
 In view of \eqref{duha} we easily get  for $ 0<T<1 $ and $v_i\in X_T $, $ i=1,2$,
\begin{equation}\label{l1}
 \| \Lambda_{u_0}(v_i) \|_{X_T} \lesssim  \|S_{\alpha}(\cdot) u_0\|_{X_{T}} + T^{s-(1/2-2\alpha)\over 2\alpha} \|v_i\|_{X_T}^2
 \end{equation}
 and
 \begin{equation}\label{l2}
 \| \Lambda_{u_0}(v_1-v_2) \|_{X_T} \lesssim  T^{s-(1/2-2\alpha)\over 2\alpha} (\|v_1\|_{X_T} +\| v_2\|_{X_T})\|v_1-v_2\|_{X_T} \; .
 \end{equation}
 Combining these estimates with \eqref{linear2} we infer  that for $ s>1/2-2\alpha $,
 $ \Lambda_{u_0} $ is a strict contraction on $ X_{M,T} $ with $ M\sim  \|u_0\|_{H^s(\R)} $
 and $ T\sim \|u_0\|_{H^s(\R)}^{-2\alpha\over s-(1/2-2\alpha)} $ if $ s>1/2-2\alpha $.
 This leads to the  existence and uniqueness in $ X_T $  for any $ u_0\in H^s(\R) $.
 For $ s=1/2-2\alpha , $ $ \Lambda_{u_0} $ is also  a strict contraction on $ X_{M,T} $
 with $ M\sim  \|u_0\|_{H^s(\R)} $ and $ T\sim 1 $ but only  under a smallness assumption on
 $ \|u_0\|_{H^s(\R)} $. Hence, we  get the existence in  $ X_T $  for any $ u_0\in H^s(\R) $
 with  small initial data.
 Now to prove that the solution $ u$ belongs to $  C([0,T] ; H^s(\R)) $
 we first notice that $ S_\alpha(u_0) \in  C(\R_+; H^s(\R)) $. Moreover , according to
  \eqref{effetregularisantHs}, we have
 \begin{align}
 \sup_{t\in ]0,T[} \Bigl\| \int_0^t S_{\alpha}(t-t') (u^2-v^2)(t') \, dt' \Bigr\|_{H^{s}(\R)} &
  \lesssim
  \sup_{t\in ]0,T[} \int_0^t  \Bigl\| S_{\alpha}(t-t') (u^2-v^2)(t')  \Bigr\|_{H^{s}(\R)} \, dt'
   \nonumber \\
 & \hspace*{-10mm}\lesssim   \sup_{t\in ]0,T[} \int_0^t
 (t-t')^{\min(0,{(2s_0-1/2)-s\over 2\alpha})} \|u^2-v^2\|_{H^{2s_0-1/2}(\R)} \, dt' \nonumber
 \\
& \hspace*{-10mm}\lesssim    \sup_{t\in ]0,T[} \int_0^t
(t-t')^{\min(0,{(2s_0-1/2)-s\over 2\alpha})} \|u-v\|_{H^{s_0}(\R)}\|u+v\|_{H^{s_0}(\R)} \,
dt' \nonumber \\
& \hspace*{-10mm}\lesssim  \|u+v\|_{X_T}\|u-v\|_{X_T} \nonumber \\
 & \hspace*{-5mm} \; T^{\min(1+{s-s_0\over \alpha},{s-(1/2-2\alpha)\over 2\alpha})} \int_0^1
 (1-\theta)^{\min(0,{(2s_0-1/2)-s\over 2\alpha})} \theta^{s-s_0\over \alpha} \, d\theta \nonumber \\
 & \hspace*{-10mm}\lesssim    T^{\min(1+{s-s_0\over \alpha},{s-(1/2-2\alpha)\over 2\alpha})} \|u+v\|_{X_T}\|u-v\|_{X_T} \; \label{duha2}
\end{align}
  where in the last step we used that $ 0<s_0-s<\alpha $ and that
  $ {2 s_0-1/2-s\over 2\alpha}>-1 $ since $ 2 s_0-s>s\ge 1/2-2\alpha $.
   This ensures that starting with a continuous function
   $v\in C\big([0,T]; H^s(\R)\big) \cap X_{M,T} $, the sequence of function
   constructed by the Picard sheme  that converges to the solution in $u\in  X_T $
   is a Cauchy sequence in $C\big([0,T]; H^s(\R)\big) $ and thus
   $ u\in C\big([0,T]; H^s(\R)\big) $.
   The continuous dependence with respect to initial data in
   $ H^s(\R) $ follows  also easily from  \eqref{duha2}.

  It remains to handle the case of arbitrary large initial data in $ H^{s_c}(\R) $
  when $ s=s_c=1/2-2\alpha $. We first notice that, according
  to \eqref{l1}-\eqref{l2}, $\Lambda_{u_0} $ is a strict
  contraction in $ X_{M,T} $ as soon as $ M=2\|S_\alpha(\cdot) u_0\|_{X_T} $ is small enough.
  Then, fixing $ u_0\in H^{s_c}(\R) $, by the density of $ H^{s_0}(\R) $ in $ H^{s_c} (\R) $
  we infer that for any $\varepsilon>0 $ there exists $ u_{0,\varepsilon}\in H^{s_0}(\R) $
  such that
   $ \|u_0-u_{0,\varepsilon}\|_{H^{s_c}(\R)}<\varepsilon$.
   Since $ u_{0,\varepsilon}\in H^{s_0}(\R) $ it holds
   $\|S_\alpha(\cdot) u_{0,\varepsilon}\|_{X_T} \le
   T^{s_0-s\over 2\alpha}  \|u_{0,\varepsilon}\|_{H^{s_0}(\R)} $. This leads to
   $$
   \|S_\alpha(\cdot) u_0\|_{X_T} \lesssim   T^{s_0-s\over 2\alpha}
    \|u_{0,\varepsilon}\|_{H^{s_0}(\R)}+\varepsilon \; .
   $$
   Noticing that the right-hand side member of the above inequality can be made arbitrary
   small by choosing suitable $ \varepsilon>0 $ and $ T>0 $,
   this proves the local  existence in  $C\big([0,T];H^{s_c}(\R)\big)\cap X_T $
   for arbitrary large initial data in $H^{s_c}(\R) $.
   Note that here $ T>0 $ does not depend only on
    $\|u_0\|_{H^{s_c}(\R)} $ but on the Fourier profile of $ u_0$.
    The uniqueness holds in
     $ \{f\in X_T\, /\,  \|f\|_{X_t} \to 0\;  \mbox{as }\;  t\searrow 0 \} $  .
     This completes the proof for $ (\alpha, s) $ satisfying \eqref{ra}.

  Finally for $s=1/2$ we  apply the fixed point argument in
  $$\tilde{X}_{M,T}:=\Big\{u\in C\big((0,T], H^{1+\alpha\over 2 }(\R)\big)\; |\; \|u\|_{\tilde{X}_{T}}:=\sup_{t\in(0,T]}t^{{1\over
4}}\|u(t)\|_{H^{1+\alpha\over 2}(\R)}\leq M\Big\}.$$ Using that $
H^{1+\alpha\over 2}(\R) $ is an algebra we easily get
    \begin{eqnarray}
t^{1\over 4}  \Bigl\| \int_0^t S_{\alpha}(t-t') u^2(t') \, dt'
\Bigr\|_{H^{1+\alpha\over 2}(\R)}
& \lesssim & t^{1\over 4} \int_0^t \|u^2\|_{H^{1+\alpha\over 2}(\R)} \, dt' \nonumber \\
& \lesssim & t^{1\over 4} \int_0^t \|u\|_{H^{1+\alpha\over 2}(\R)}^2 \, dt' \nonumber \\
 & \lesssim &     t^{3\over 4} \|u\|_{\tilde{X}_t}^2  \int_0^1 \theta^{-1/2} \, d\theta\;  .\label{duha4}
\end{eqnarray}
This gives the local existence and uniqueness in $ \tilde{X}_{M,T} $
for $ M\sim \|u_0\|_{H^{1/2}(\R)}$ and $ T\sim
\|u_0\|_{H^{1/2}(\R)}^{-4\over 3} $. The fact that the solution $ u$
belongs to $C\big([0,T]; H^{1\over 2}(\R)\big) $ and the continuous
dependence with respect to initial data in $ H^{1\over 2}(\R)$
follows by noticing that
 \begin{eqnarray}
 \sup_{t\in ]0,T[} \Bigl\| \int_0^t S_{\alpha}(t-t') (u^2-v^2)(t') \, dt'
 \Bigr\|_{H^{1\over 2}(\R)}
 & \lesssim &  \sup_{t\in ]0,T[} \int_0^t
 \|S_{\alpha}(t-t') (u^2-v^2)(t')\|_{H^{1\over 2}(\R)} \, dt' \nonumber \\
 & \lesssim &  \sup_{t\in ]0,T[} \int_0^t
 \|(u^2-v^2)(t')\|_{H^{1\over 2}(\R)} \, dt' \nonumber \\
 & \lesssim &  \sup_{t\in ]0,T[} \int_0^t
 \|u(t')^2-v(t')^2\|_{H^{1+\alpha\over 2}(\R)} \, dt' \nonumber\\
 & \lesssim &  \sup_{t\in ]0,T[} \int_0^t
 \|u(t')+v(t')\|_{H^{1+\alpha\over 2}(\R)}\|u(t')-v(t')\|_{H^{1+\alpha\over 2}(\R)} \, dt' \nonumber\\
& \lesssim & \|u+v\|_{\tilde{X}_T}\|u-v\|_{\tilde{X}_T} \; T^{1\over
2 } \int_0^1 \theta^{-1/2} d\theta \; .
\end{eqnarray}
\end{proof}
\subsection{Illposedness result for $\alpha=1/2 $ and $ s=-1/2$}
Let us now prove an  ill-posedness result at the crossing point $(\alpha,s)=(1/2,-1/2) $ of the two lines $ s=-\alpha $ and $s=1/2-2\alpha $.
 Recall that there exists $ T_0>0 $ and $ R_0>0 $  such that  the solution-map $ u_0 \mapsto u $ associated with \eqref{NLH} for $ \alpha=1/2 $ is well-defined and  continuous from the ball $ B(0,R_0)_{L^2} $ of $ L^2(\R) $ with values in $ C([0,T]; L^2(\R))$.
The following norm inflation result clearly disproves the continuity of this solution map    from  $ B(0,R_0)_{L^2} $ endowed with the $ H^{-1/2} $-topology with values in   $ C([0,T]; H^{-1/2}) $, for any $ T\le T_0 $.
\begin{To}\label{to3}
There exists a sequence $ T_N \searrow 0 $ and a sequence of initial data $ \{\phi_N\} \subset L^2(\R) $ such that  the sequence of emanating solutions
 $ \{ u_N\}  $ of \eqref{NLH}is included in  $ C([0,T_N]; L^2(\R)) $ and  satisfy
 \begin{equation}
 \|\phi_N \|_{H^{-1/2}} \to 0  \quad \mbox{ and } \quad \|u_N(T_N) \|_{H^{-1/2}} \to +\infty \mbox{ as } N\to \infty \; .
 \end{equation}
\end{To}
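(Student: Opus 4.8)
The plan is to realise the high‑to‑low frequency cascade of \cite{IO} directly in the scaling‑critical space $H^{-1/2}(\R)$ (recall that here $\alpha=1/2$, so $2\alpha=1$), with an initial datum made of many unit bumps sitting at \emph{nearby} very high frequencies: the pairwise beats of these bumps fill an entire low‑frequency window, and since $\Theta_\alpha(\xi,\xi_1)=|\xi|-|\xi_1|-|\xi-\xi_1|$ produces no gain for anti‑aligned frequencies, their (sign‑coherent) contributions to the second Picard iterate add up with a logarithmic amplification. Fix sequences $K_N\to+\infty$, $M_N\to+\infty$ with $M_N\gg K_N$, set $J_{N,k}=[M_N+4k,M_N+4k+2]$ for $1\le k\le K_N$, and define $\phi_N\in L^2(\R)$ by
$$\widehat{\phi_N}(\xi)=\sum_{k=1}^{K_N}\bigl(\chi_{J_{N,k}}(\xi)+\chi_{J_{N,k}}(-\xi)\bigr),$$
a superposition of $K_N$ mirror pairs of the profiles \eqref{deftphij} (with amplitude $1$ instead of $N^\alpha$). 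Plancherel gives $\|\phi_N\|_{L^2}^2\sim K_N$ and $\|\phi_N\|_{H^{-1/2}}^2\sim K_N/M_N$. Put $T_N:=c_0\|\phi_N\|_{L^2}^{-2}\sim c_0/K_N$; by the subcritical $L^2$‑theory at $\alpha=1/2$ (Theorem \ref{wellstandrd} with $s=0$, whose proof gives an existence time $\gtrsim\|u_0\|_{L^2}^{-2}$ together with $\|u_N\|_{C([0,T_N];L^2)}\lesssim\|\phi_N\|_{L^2}$), the solution $u_N$ of \eqref{NLHint} lies in $C([0,T_N];L^2(\R))$ and $T_N\searrow0$.

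The heart of the matter is the second Picard iterate $v_2(t):=\mp\int_0^tS_\alpha(t-s)\bigl(S_\alpha(s)\phi_N\bigr)^2\,ds=\mp\tfrac12A_2(t,\phi_N,\phi_N)$. By \eqref{za}, $\mathcal F_x(v_2(t))(\xi)$ is the $\xi_1$‑integral of $\widehat{\phi_N}(\xi_1)\widehat{\phi_N}(\xi-\xi_1)$ against the \emph{nonnegative} kernel $\bigl(e^{-(|\xi_1|+|\xi-\xi_1|)t}-e^{-|\xi|t}\bigr)/\Theta_\alpha(\xi,\xi_1)$. For $|\xi|\le cK_N$ only the beats of the positive part of one bump $J_{N,k}$ with the negative part of another bump $J_{N,k'}$ contribute; such a beat lives on $[4(k-k')-2,4(k-k')+2]$, there $|\Theta_\alpha|\sim 2M_N$, and — because $M_NT_N\sim M_N/K_N\gtrsim1$ — the kernel has size $\sim M_N^{-1}$, so each such pair contributes $\sim M_N^{-1}$ near the frequency $4(k-k')$. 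Summing the $\sim(K_N-\ell)$ pairs with $k-k'=\ell$ and using sign coherence, $|\mathcal F_x(v_2(T_N))(\xi)|\gtrsim M_N^{-1}(K_N-|\xi|/4)_+$ on $|\xi|\le cK_N$, whence
$$\|v_2(T_N)\|_{H^{-1/2}}^2\gtrsim M_N^{-2}\sum_{1\le\ell\lesssim K_N}\frac{(K_N-\ell)^2}{\ell}\gtrsim\Bigl(\frac{K_N}{M_N}\Bigr)^2\log K_N .$$
The endpoint $\alpha=1/2$ is used exactly here: $\sum_\ell\ell^{-2\alpha}(K_N-\ell)^2$ diverges only logarithmically, and only for $\alpha=1/2$. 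Choosing $M_N:=K_N(\log K_N)^{1/4}$ and $K_N:=N$ yields $\|\phi_N\|_{H^{-1/2}}\sim(\log N)^{-1/8}\to0$, $T_N\sim N^{-1}\to0$, while $\|v_2(T_N)\|_{H^{-1/2}}\gtrsim(\log N)^{1/4}\to+\infty$.

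It remains to show that $v_2(T_N)$ governs $u_N(T_N)$ at low frequencies. Since $\phi_N$ is frequency‑localised near $\pm M_N$, $S_\alpha(T_N)\phi_N$ vanishes on $\{|\xi|\le cK_N\}$ and $\|S_\alpha(T_N)\phi_N\|_{H^{-1/2}}\le\|\phi_N\|_{H^{-1/2}}\to0$; moreover the structure of $u_N=\sum_{k\ge1}v_k$ shows that $v_k$ is supported, up to $O(K_N)$‑neighbourhoods, at $\pm M_N,\pm3M_N,\dots$ for $k$ odd and at $0,\pm2M_N,\pm4M_N,\dots$ for $k$ even, so the odd iterates carry nothing at frequencies $\lesssim K_N$. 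Writing $P$ for the projection onto $\{|\xi|\le cK_N\}$ and $r_N:=\sum_{k\ge3}v_k=u_N-S_\alpha(\cdot)\phi_N-v_2$, one has from \eqref{NLHint}, with $v_1:=S_\alpha(\cdot)\phi_N$,
$$r_N=\mp\bigl(2\mathcal L_\alpha(v_1v_2)+\mathcal L_\alpha(v_2^2)+2\mathcal L_\alpha(v_1r_N)+2\mathcal L_\alpha(v_2r_N)+\mathcal L_\alpha(r_N^2)\bigr),$$
where $\mp2\mathcal L_\alpha(v_1v_2)=v_3$ has no low frequency and every other term is at least quadratic in $(v_2,r_N)$. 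One estimates $\|Pr_N(T_N)\|_{H^{-1/2}}$ by a bootstrap in an $L^2$‑based resolution norm, using: (i) the elementary bilinear bounds $\|P(fg)\|_{L^2}\lesssim K_N^{1/2}\|f\|_{L^2}\|g\|_{L^2}$ and $\|P(fg)\|_{H^{-1/2}}\lesssim(\log K_N)^{1/2}\|f\|_{L^2}\|g\|_{L^2}$; (ii) the frequency gap $K_N\ll M_N$, confining the high part of $u_N$ to $O(K_N)$‑neighbourhoods of $\pm M_N,\pm2M_N,\dots$; (iii) the smoothing $e^{-sM_N}$ carried by $S_\alpha$ at frequency $M_N$, active for $s\gtrsim M_N^{-1}\ll T_N$, which after integration produces decisive negative powers of $M_N$; and (iv) the smallness $T_N\sim K_N^{-1}$. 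A direct computation then gives, for instance, $\|P\mathcal L_\alpha(v_2^2)(T_N)\|_{H^{-1/2}}\lesssim 1$ and, inductively, $\|Pv_{2j}(T_N)\|_{H^{-1/2}}\lesssim(\log K_N)^{-1/4}\|Pv_{2j-2}(T_N)\|_{H^{-1/2}}$, so that $\|Pr_N(T_N)\|_{H^{-1/2}}=o\bigl((\log N)^{1/4}\bigr)$. Combining with the lower bound for $v_2$, $\|u_N(T_N)\|_{H^{-1/2}}\ge\|Pv_2(T_N)\|_{H^{-1/2}}-o\bigl((\log N)^{1/4}\bigr)\to+\infty$, which is the assertion of Theorem \ref{to3}. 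The main obstacle is precisely this last step: showing that every quadratic and higher interaction feeding $Pr_N$ is either parity‑forbidden there, or loses enough — via the frequency gap, the semigroup smoothing at frequency $M_N$, and the smallness of $T_N$ — to absorb both the $K_N^{1/2}$ loss in the bilinear estimate and the amplification built into $v_2$.
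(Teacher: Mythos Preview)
Your lower bound for the second Picard iterate is sound: the many narrow unit bumps at $M_N+4k$ beat pairwise to fill a low-frequency window of width $\sim K_N$, the kernel saturates at $\sim M_N^{-1}$ once $M_NT_N\gtrsim1$, and the count of contributing pairs gives the claimed amplitude $\sim M_N^{-1}(K_N-|\ell|)$ near $\xi\approx4\ell$, yielding $\|v_2(T_N)\|_{H^{-1/2}}^2\gtrsim (K_N/M_N)^2\log K_N$. This is a legitimate alternative to the paper's mechanism (a single bump of width $2^N$ and small amplitude $R$, with the logarithm coming from $\int_{|\xi|\le 2^N}\langle\xi\rangle^{-1}d\xi$).

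The remainder argument, however, does not close as written. Your only small parameter is $K_N/M_N=(\log K_N)^{-1/4}$, and the crude $L^2$ control coming from Theorem~\ref{wellstandrd} gives $\|r_N\|_{L^\infty_{T_N}L^2}\lesssim \|\phi_N\|_{L^2}\sim K_N^{1/2}$. Feeding this into your own bilinear estimate $\|P(fg)\|_{H^{-1/2}}\lesssim(\log K_N)^{1/2}\|f\|_{L^2}\|g\|_{L^2}$ yields
\[
\|P\mathcal L_\alpha(r_N^2)(T_N)\|_{H^{-1/2}}\lesssim T_N(\log K_N)^{1/2}\|r_N\|_{L^\infty L^2}^2\lesssim (\log K_N)^{1/2},
\]
which is \emph{larger} than $\|Pv_2(T_N)\|_{H^{-1/2}}\sim(\log K_N)^{1/4}$, not $o((\log K_N)^{1/4})$. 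The term $P\mathcal L_\alpha(v_1 r_N)$ is worse still, since it requires control of $r_N$ in an $O(K_N)$-window around $\pm M_N$, which you never establish. The asserted inductive bound $\|Pv_{2j}\|_{H^{-1/2}}\lesssim(\log K_N)^{-1/4}\|Pv_{2j-2}\|_{H^{-1/2}}$ is neither proved nor clearly true: the parity argument kills the odd iterates at low frequency, but it does not produce a contraction factor for the even ones, and the semigroup smoothing at frequency $M_N$ only yields $e^{-cM_NT_N}=e^{-c(\log K_N)^{1/4}}$, which beats powers of $\log K_N$ but has to be extracted from a coupled system tracking $r_N$ simultaneously near $0,\pm M_N,\pm 2M_N,\ldots$---none of which you carry out.

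The paper avoids this difficulty entirely by choosing the data with a \emph{genuine} small amplitude parameter $R=N^{-1/4}\ln N$ and working in the rescaled modulation spaces $(M_{2,1})_N$. These are algebras with constant $2^{N/2}$, so one gets the clean bound $\|A_k\|_{(M_{2,1})_N}\lesssim (4C_0)^k t^{k-1}R^k 2^{(2k-1)N/2}$ for all $k$, and the ratio of consecutive terms is $\sim 2^N R\,T_N\sim R\to 0$; combining this with the support property $\mathrm{supp}\,\widehat{A_k}\subset[-k2^{N+2},k2^{N+2}]$ gives $\sum_{k\ge 3}\|A_k(T_N)\|_{H^{-1/2}}\to 0$ directly. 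For your data there is no such amplitude parameter---the Picard ratio in any $L^2$-based or modulation-type norm is $\sim c_0$, not $o(1)$---so the tail cannot be summed without the missing fine bootstrap. Either rework the data to include a small amplitude (in which case you essentially recover the paper's proof), or supply the full frequency-window bootstrap for $r_N$; as it stands the proof is incomplete at exactly the step you flag as ``the main obstacle''.
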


 We follow exactly the very nice proof of Iwabuchi-Ogawa \cite{IO} that proved the ill-posedness in $ H^{-1} $ of the
 2-D quadratic heat equation. Note that $ (1,-1) $ is  the intersection of the two lines $ s=-\alpha $ and
 $ s=1-2\alpha $, this last line corresponding to the scaling critical Sobolev exponent in dimension 2.
We need to introduce the rescaled modulation spaces $ (M_{2,1})_N $ that are defined for any  integer $ N\ge 1 $ by
$$
(M_{2,1})_N:=\Bigl\{ u\in {\mathcal S}'(\R) \, |\,
\|u\|_{(M_{2,1})_N} <\infty\Bigl\}
$$
where
$$
 \|u\|_{(M_{2,1})_N} :=\sum_{k\in 2^N \Z} \| \hat{u} \|_{L^2(k,k+2^N)} \; .
 $$
 It is easy to check that
 \begin{eqnarray}
 \|u v  \|_{ (M_{2,1})_N}  & = &\sum_{k\in 2^N \Z} \| \hat{u}\ast \hat{v} \|_{L^2(k,k+2^N)}  \nonumber \\
 & \lesssim & \Bigl( \sum_{k\in 2^N \Z} \| \hat{v} \|_{L^1(k,k+2^N)} \Bigr) \Bigl( \sum_{k\in 2^N \Z} \| \hat{u} \|_{L^2(k,k+2^N)} \Bigr) \nonumber \\
 & \le & C_0\, 2^{N/2} \|u   \|_{ (M_{2,1})_N} \| v  \|_{ (M_{2,1})_N} \label{mult} \; ,
 \end{eqnarray}
 for some constant $ C_0>0$.
 Hence $ (M_{2,1})_N $ is an algebra and, since $ S_\alpha $ is clearly continuous in $(M_{2,1})_N$, we easily get for any $ u_0\in (M_{2,1})_N$
  and any $ v\in L^\infty_T (M_{2,1})_N $ that
 \begin{equation}
\| \Lambda_{u_0}(v)  \|_{L^\infty_T (M_{2,1})_N} \lesssim \|u_0\|_{(M_{2,1})_N}+ T 2^{N/2} \| \Lambda_{u_0}(v)  \|_{L^\infty_T (M_{2,1})_N}^2 \; .
 \end{equation}
Picard iterative scheme then ensures the well-posedness of \eqref{NLH} in  $ (M_{2,1})_N $ with a minimal time of existence
 \begin{equation}\label{minimaltime}
 T\sim 2^{-N/2}\| u_0 \|^{-1}_{(M_{2,1})_N } \; .
 \end{equation}
Therefore the analytic  expansion \eqref{analytic}  holds in $
(M_{2,1})_N $ on the time interval $ [0,T]$.

We set
$$
\widehat{\phi_{N,R}}:=R \varphi (2^{-N} \cdot)
$$
where $ \varphi $ is  defined in the beginning of Section 2, $ N\ge 1 $ and $ R>0 $ tends to $ 0$ as $ N\to \infty $.
We easily check that
\begin{equation}\label{phi}
 \|\phi_{N,R} \|_{ (M_{2,1})_N} \le  4 R 2^{N/2} \quad \mbox{and  } \quad\|\phi_{N,R} \|_{H^{-1/2}} \sim R \to 0\; \mbox{as } N\to +\infty\; .
\end{equation}
According to \eqref{minimaltime},
 the solution $ u_{N,R} $ of \eqref{NLH} emanating from $ \phi_{N,R}$ exists and satisfies on $ [0, 2^{-N}] $,
  \begin{equation}\label{analytic}
 u_{N,R}(t)=
S_{\alpha}(t)\phi_{N,R}+\sum_{k=2}^{\infty}A_k(t,\phi_{N,R}^k),
\end{equation}
where
 $h^k=(h,\cdots, h),\; h^k \mapsto A_k(t,h^k)$ are $k-$linear
continuous maps from $((M_{2,1})_N)^k$ into $C([0,T]; (M_{2,1})_N)$ and
the series converges absolutely in  $C([0,T]; (M_{2,1})_N)$.
Moreover, setting $ A_1(t,h):=S_{\alpha}(t)h $,  the $ A_k $'s satisfy the following recurrence formula for $ k\ge 2$,
 \begin{equation}\label{recurrence}
A_k(t,h^k)= \sum_{k_1+k_2=k} \int_0^t S_\alpha (t-t')\Bigl( A_{k_1}(t',h^{k_1})A_{k_2}(t',h^{k_2})\Bigr)\, dt' \; .
 \end{equation}
According to \eqref{phi}, for any $ t>0 $,
$$
\|S_\alpha(t) \phi_{N,R} \|_{H^{-1/2}} \lesssim R \to 0 \; \mbox{as
} N\to +\infty\; .
$$
Moreover, as in \eqref{za} , we have
$$
\widehat{A_2(t)}(\xi):={\mathcal
F}_x\Big(A_2(t,\phi_{N,R},\phi_{N,R})\Big)(\xi)
= 2\int_\R \widehat{\phi_{N,R}}(\xi_1)\widehat{\phi_{N,R}}(\xi-\xi_1)e^{-|\xi| t}
\Big({e^{[|\xi|-|\xi_1|-|\xi-\xi_1|]t
}-1\over |\xi|-|\xi_1|-|\xi-\xi_1|}\Bigr)d\xi_1 \; .
$$
By the support  property of $ \hat{\phi}_{N,R} $ we infer that for $
t\lesssim 2^{-N} $ it holds
$$
e^{-|\xi| t} \, \Big|{e^{[|\xi|-|\xi_1|-|\xi-\xi_1|]t
}-1\over |\xi|-|\xi_1|-|\xi-\xi_1|}\Bigr| \sim t \; .
$$
This ensures that $ |\widehat{A_2(t)}(\xi)|\gtrsim R^2 2^{N}  t $ \hspace*{2mm} for  $ t\lesssim 2^{-N} $ and $ |\xi|\le2^N/8  $. Hence,
\begin{eqnarray}
\|A_2(t) \|_{H^{-1/2}} & \gtrsim &  R^2 2^{N}  t \Bigl( \int_{-2^N\over 8}^{2^N\over 8}  \langle \xi \rangle^{-1}\Bigr)^{1/2} \nonumber \\
 &\gtrsim &  R^22^N t  N^{1/2}  \; ,\label{A2}
\end{eqnarray}
where $\langle \xi \rangle=(1+|\xi|^2)^{1/2}.$ On the other hand,
we have the following upper bound on the $ H^{-1/2}$-norm of the
$A_k $'s.
\begin{lem} \label{ll}
For any $ k\ge 3 $ it holds
\begin{equation}\label{Ak}
\|A_k(t,\phi_{N,R}^k)\|_{H^{-1/2}}\le  8^k C_0^{k-1} (N+\ln k)^{1/2} R^k 2^{(2k-2){N\over 2}} k  t^{k-1}  \; .
\end{equation}
\end{lem}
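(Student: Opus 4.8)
The plan is to extract everything from the recurrence \eqref{recurrence} by a double induction on $k$, carried out entirely on the Fourier side, exploiting the fact that the $H^{-1/2}$-norm only "sees" a logarithmic amount of the (large but bounded) Fourier support of $A_k$.

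First I would collect the elementary ingredients. Since $\alpha=1/2$ here, the symbol of $S_\alpha(t)$ is $e^{-t|\xi|}$, so $|\mathcal F_x(S_\alpha(t-t')f)(\xi)|\le|\hat f(\xi)|$, and $|\widehat{fg}|\le|\hat f|\ast|\hat g|$ up to the harmless $(2\pi)^{-1}$ of the Fourier convention. Moreover $\widehat{\phi_{N,R}}=R\,\varphi(2^{-N}\cdot)$ is nonnegative, bounded by $R$, and supported in $\{2^{N}\le|\xi|\le 2^{N+2}\}$; hence $\|\widehat{\phi_{N,R}}\|_{L^1_\xi}\le \|\varphi\|_{L^1}\,R\,2^{N}$ and, by the convolution structure of \eqref{recurrence}, $\widehat{A_k(t,\phi_{N,R}^k)}$ is supported in $\{|\xi|\le k\,2^{N+2}\}$. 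Finally, for $X\ge 1$ one has $\int_{|\xi|\le X}\langle\xi\rangle^{-1}\,d\xi\le 2(1+\ln X)$, which applied with $X=k\,2^{N+2}$ gives, for $k\ge 3$ and $N\ge 1$, $\int_{|\xi|\le k2^{N+2}}\langle\xi\rangle^{-1}\,d\xi\lesssim N+\ln k$. Therefore
$$\|A_k(t,\phi_{N,R}^k)\|_{H^{-1/2}}^2\le \|\widehat{A_k(t,\phi_{N,R}^k)}\|_{L^\infty_\xi}^2\int_{|\xi|\le k2^{N+2}}\langle\xi\rangle^{-1}\,d\xi\lesssim (N+\ln k)\,\|\widehat{A_k(t,\phi_{N,R}^k)}\|_{L^\infty_\xi}^2,$$
and it suffices to control $Q_k(t):=\|\widehat{A_k(t,\phi_{N,R}^k)}\|_{L^\infty_\xi}$.

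For this I would simultaneously track $P_k(t):=\|\widehat{A_k(t,\phi_{N,R}^k)}\|_{L^1_\xi}$. Taking absolute values in \eqref{recurrence} and using $|e^{-(t-t')|\xi|}|\le1$ together with Young's convolution inequality ($\|f\ast g\|_{L^1}\le\|f\|_{L^1}\|g\|_{L^1}$ and $\|f\ast g\|_{L^\infty}\le\|f\|_{L^1}\|g\|_{L^\infty}$) gives, for $k\ge 2$,
$$P_k(t)\le \sum_{k_1+k_2=k}\int_0^t P_{k_1}(t')P_{k_2}(t')\,dt',\qquad Q_k(t)\le \sum_{k_1+k_2=k}\int_0^t P_{k_1}(t')Q_{k_2}(t')\,dt',$$
with $P_1(t)\le\|\varphi\|_{L^1}R\,2^{N}$ and $Q_1(t)\le R$. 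Since there are exactly $k-1$ ordered pairs $(k_1,k_2)$ with $k_1+k_2=k$ and $k_i\ge1$, and this cancels the $1/(k-1)$ produced by $\int_0^t(t')^{k-2}\,dt'$, a straightforward induction on $k$ yields
$$P_k(t)\le \|\varphi\|_{L^1}^{k}\,(R\,2^{N})^{k}\,t^{k-1},\qquad Q_k(t)\le \|\varphi\|_{L^1}^{k-1}\,R^{k}\,2^{(k-1)N}\,t^{k-1}.$$
Inserting the $Q_k$ bound into the displayed reduction and recalling $2^{(k-1)N}=2^{(2k-2)N/2}$, we obtain $\|A_k(t,\phi_{N,R}^k)\|_{H^{-1/2}}\lesssim (N+\ln k)^{1/2}\,\|\varphi\|_{L^1}^{k-1}\,R^{k}\,2^{(2k-2)N/2}\,t^{k-1}$, which is of the claimed form \eqref{Ak}: one only has to check that the numerical constants (the absolute constant above, $\|\varphi\|_{L^1}$, and whatever fixed constant multiplies the sum in \eqref{recurrence}) are dominated by $8^{k}C_0^{k-1}k$, which is very generous since $C_0\ge1$.

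The one point needing care — and the real obstacle — is realizing that one must propagate the $L^\infty_\xi$ bound $Q_k$, which scales like $2^{(k-1)N}$, rather than an $L^2_\xi$ (or $(M_{2,1})_N$) bound, which would only scale like $2^{(2k-1)N/2}$ and thus lose exactly the factor $2^{N/2}$ that separates the index $s=-1/2$ from $s=0$. Propagating $Q_k$ forces one to carry along the auxiliary $L^1_\xi$ bound $P_k$ in order to close the estimate $\|\widehat{A_{k_1}}\ast\widehat{A_{k_2}}\|_{L^\infty}\le\|\widehat{A_{k_1}}\|_{L^1}\|\widehat{A_{k_2}}\|_{L^\infty}$, and the gain is then made concrete by the observation that, although $\langle\xi\rangle^{-1}$ is of order $1$ near $\xi=0$, its integral over the $O(k\,2^{N})$-sized Fourier support of $A_k$ is only of order $N+\ln k$ — which is also precisely where the factor $(N+\ln k)^{1/2}$ in the statement comes from.
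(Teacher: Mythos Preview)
Your argument is correct. Both you and the paper begin with the same reduction: the Fourier support of $A_k$ lies in $\{|\xi|\le k\,2^{N+2}\}$, so $\|A_k\|_{H^{-1/2}}\lesssim (N+\ln k)^{1/2}\|\widehat{A_k}\|_{L^\infty_\xi}$, and the real work is a recursive bound on $\|\widehat{A_k}\|_{L^\infty_\xi}$.

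The route you take from there is genuinely different and more elementary. The paper controls $\|\widehat{A_{k_1}}\ast\widehat{A_{k_2}}\|_{L^\infty_\xi}$ by Cauchy--Schwarz as $\|A_{k_1}\|_{L^2}\|A_{k_2}\|_{L^2}$, then bounds each $L^2$-norm by the rescaled modulation norm $\|\cdot\|_{(M_{2,1})_N}$ and invokes the separately proved inductive estimate $\|A_j\|_{(M_{2,1})_N}\le 4^jC_0^{j-1}t^{j-1}R^j2^{(2j-1)N/2}$; this is where the constants $C_0$ and the factor $k$ in the statement come from. You instead use Young's inequality $\|\widehat{A_{k_1}}\ast\widehat{A_{k_2}}\|_{L^\infty_\xi}\le\|\widehat{A_{k_1}}\|_{L^1_\xi}\|\widehat{A_{k_2}}\|_{L^\infty_\xi}$ and carry along the auxiliary $L^1_\xi$-bound $P_k$; the coupled induction for $(P_k,Q_k)$ then closes exactly, with the $(k-1)$ terms in the sum cancelling the $(k-1)^{-1}$ from the time integral. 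Your approach bypasses the modulation-space machinery entirely and yields slightly sharper constants (no combinatorial factor $k$), at the cost of having to track two quantities simultaneously; the paper's approach has the virtue of reusing the $(M_{2,1})_N$ estimate already needed for the local well-posedness in that space.
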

\begin{proof}
We first prove that for $ k\ge 1 $ we have
\begin{equation}\label{estM}
\|A_k(t,\phi_{N,R}^k)\|_{(M_{2,1})_N} \le 4^k C_0^{k-1} t^{k-1} R^k
2^{(2k-1)N/2}.
\end{equation}
For $ k=1 $,  it follows directly from \eqref{phi} that
$$
\|A_1(t,\phi_{N,R}) \|_{(M_{2,1})_N}=\|S_{1/2}(t) \phi_{N,R} \|_{(M_{2,1})_N} \le 4 R 2^{N/2} ,
$$
and using \eqref{mult} we obtain
\begin{eqnarray}
\|A_2(t,\phi_{N,R}^2)\|_{(M_{2,1})_N}&  \le &  \int_0^t \Big\| \Bigl( A_1(\tau, \phi_{N,R})\Bigr)^2 \Bigr\|_{(M_{2,1})_N} \, d\tau\nonumber \\
& \le & C_0 2^{N/2}  \int_0^t \Big\| A_1(t) \phi_{N,R} \Bigr\|_{(M_{2,1})_N}^2  \, d\tau \nonumber \\
 & \le & 4^2 C_0 2^{3 N/2} R^2 t  \; .
\end{eqnarray}
In view of the expression \eqref{recurrence} of $ A_k(t,\phi_{N,R}^k)$,  \eqref{estM}  follows then easily  by a recurrence argument on $ k $. \\
Now, again from \eqref{recurrence} it is easy to check that the
support of  the space Fourier transform of $A_k(t,\phi_{N,R}^k) $ is
contained in $ \{\xi\in \R, \, |\xi| \le k 2^{N+2} \}$. It thus
holds, using Hausdorff-Young and H\"older inequalities, that
\begin{eqnarray}
\|A_k(t,\phi_{N,R}^k) \|_{H^{-1/2}} & \le  &  \|\langle \cdot
\rangle^{-1/2}\|_{L^2(-k 2^{N+2},\, k2^{N+2})} \,
\sup_{\xi\in \R} |\widehat{A_k}(t,,\phi_{N,R}^k) |(\xi) \nonumber \\
& \lesssim & 2 (N+\ln k)^{1/2}\sum_{k_1+k_2=k} \int_0^t \|
\widehat{A_{k_1}}(\tau,\phi_{N,R}^{k_1})\star
\widehat{A_{k_2}}(\tau,\phi_{N,R}^{k_2})\|_{L^\infty_\xi} \, d\tau  \nonumber \\
& \le & 2 (N+\ln k)^{1/2}\sum_{k_1+k_2=k} \int_0^t
\|A_{k_1}(\tau,\phi_{N,R}^{k_1})\|_{L^2}
\|A_{k_2}(\tau,\phi_{N,R}^{k_2})\|_{L^2} \, d\tau \; .\nonumber
\end{eqnarray}
Therefore \eqref{mult} and the fact that $ (M_{2,1})_N \hookrightarrow L^2 $, with an embedding constant less than 1, lead to
\begin{eqnarray}
\|A_k(t,\phi_{N,R}^k) \|_{H^{-1/2}} & \le  &  2 (N+\ln
k)^{1/2}\sum_{k_1+k_2=k} \int_0^t
\|A_{k_1}(\tau,\phi_{N,R}^{k_1})\|_{(M_{2,1})_N}
\|A_{k_2}(\tau,\phi_{N,R}^{k_2})\|_{(M_{2,1})_N} \, d\tau \nonumber \\
& \le & 2 (N+\ln k)^{1/2} 4^k C_0^{k-1} R^k 2^{(2k -2)N/2} \int_0^t  \tau^{k-2} \, d\tau  \Big(\sum_{k_1+k_2=k}1\Big) \nonumber \\
& \le &  2 (N+\ln k)^{1/2}  4^k C_0^{k-1}R^k2^{(2k -2)N/2}  \frac{k
\, t^{k-1}}{k-1}  \; .
\end{eqnarray}
\end{proof}
We  deduce from the above lemma that
\begin{eqnarray}
\sum_{k\ge 3} \|A_k(t, \phi_{N,R}^k)\|_{H^{-1/2}} \le  8^3 C_0^3 2^{2N} R^3 t^2  \sum_{k\ge 3} (N+\ln k)^{1/2} (8 C_0 2^{N} Rt)^{k-3} \; .
\end{eqnarray}
Therefore setting $ R:=N^{-1/4} \ln N $ we get
\begin{equation}\label{tu}
\sup_{0<t\le (8C_0 2^{N})^{-1}} \sum_{k\ge 3} \|A_k(t,
\phi_{N,R}^k)\|_{H^{-1/2}} \lesssim  N^{-3/4} (\ln N)^3 \sum_{k\ge
3} (N+\ln k)^{1/2} \Bigl( \frac{\ln N}{N^{1/4}}\Bigr)^{k-3} \le
\gamma(N)\; ,
\end{equation}
with $ \gamma(N)\to 0 $ as $ N\to \infty $. Setting $ T_N:=(8C_0
2^{N})^{-1}$ and gathering \eqref{A2}, \eqref{Ak}, \eqref{tu}   and
\eqref{analytic} we deduce that
\begin{equation}\label{to}
\|u_N(T_N)\|_{H^{-1/2}} \gtrsim C (\ln N)^2 - N^{-1/4} \ln N - \gamma(N)\longrightarrow +\infty \mbox{ as } N\to \infty  \; ,
\end{equation}
which, together with \eqref{phi},  concludes the proof of Theorem \ref{to3}.
\begin{rem}
{\rm By the previous theorems, for  $0<\alpha\le 1 $, we obtained
the well-posedness of the fractional heat equation (\ref{NLHint}) in
$H^s(\R)$ for $s\ge \max(-\alpha, 1/2-2\alpha) $ and
$(\alpha,s)\not=(1/2,-1/2)$. See Figure \ref{resumefig}.}
\end{rem}

\begin{center}
\begin{figure}
\includegraphics[scale=0.7]{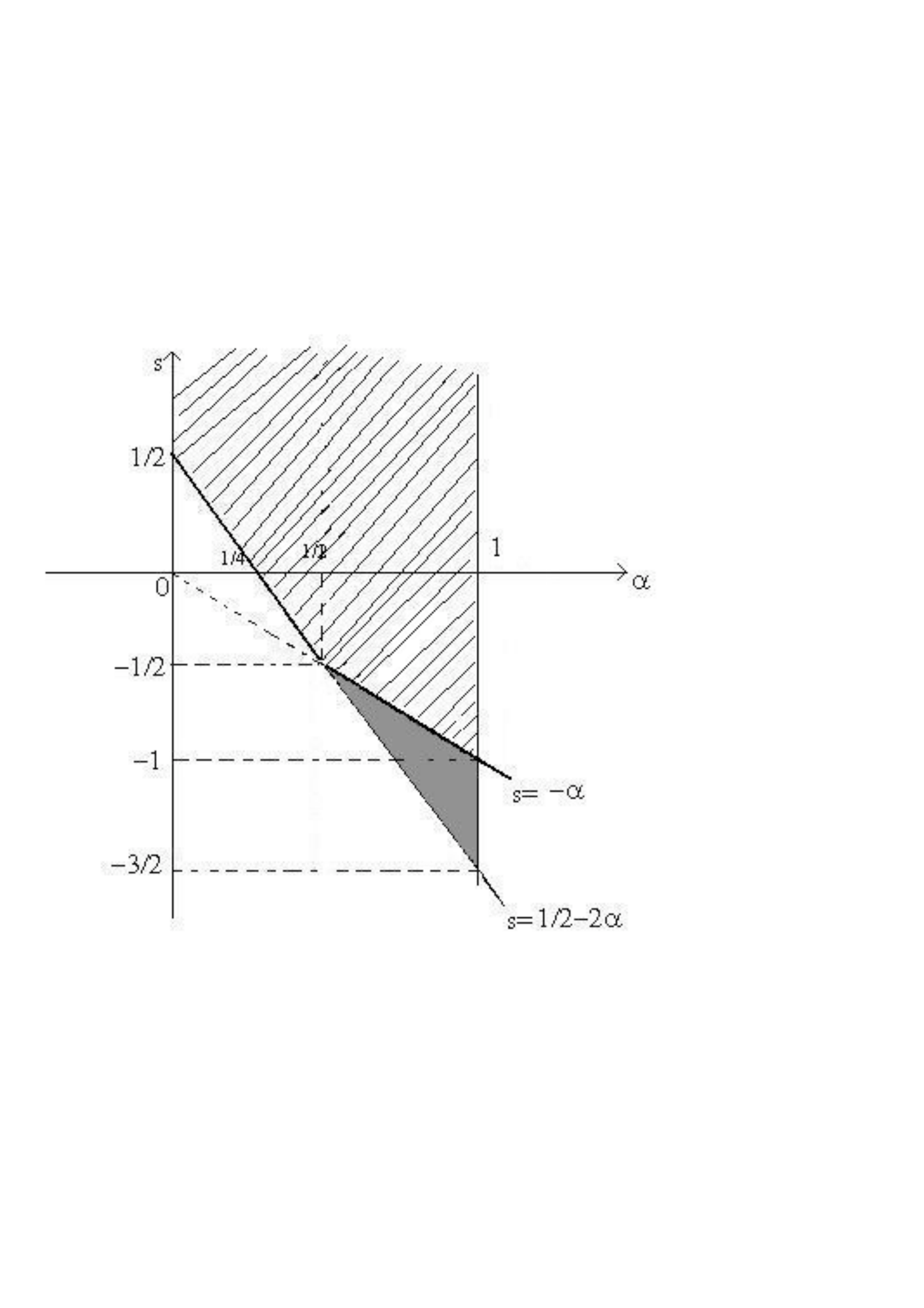}
\caption{The domains of well-posedness and ill-posedness for the
fractional heat equation (\ref{NLHint}) in $H^s(\R),\; \alpha \, \in
(0,1],\; s\in \R.$ Well-posedness holds inside the hatched region
with its boundary without the point $(1/2,-1/2)$. Ill-posedness
holds in side the shaded region and the point $(1/2,-1/2)$.}
\label{resumefig}
\end{figure}
\end{center}

\subsection{The periodic case}
The periodic case can be treated in exactly the same way as the real line case since
the linear fractional   heat equation enjoys the same regularizing effects  on the torus.
The only difference is that the dilation symmetry, that we used at the end of Section
\ref{Well}, does not keep a torus invariant but maps it to another torus.
 To overcome this difficulty it suffices to notice that, in the periodic setting,
 the  estimates derived in Section \ref{linear} are uniform for all  period
 $ \lambda \ge 1 $.

\thebibliography{ww}

\bibitem{BT}{I. Bejenaru and T. Tao, {\it Sharp well-posedness and ill-posedness results
for a quadratic non-linear Schr\"odinger equation}, J. Funct. Anal.
233 (2006), 228--259.}

\bibitem{B}{J. Bourgain, {\it Fourier restriction phenomena for certain lattice subset
applications to nonlinear evolution equation}, Geometric and
Functional Anal., 3(1993), 107--156, 209--262.}

\bibitem{BC}{H. Br\'ezis and T. Cazenave, {\it A nonlinear heat equation with
singular initial data}, J. Anal. Math., 68 (1996), 277-304.}

\bibitem{G}{T. Ghoul, {\it An extension of Dickstein's ``small lambda'' theorem for finite time blowup},
Nonlinear Analysis, 74 (2011), 6105--6115.}

\bibitem{Gi}{Y. Giga, {\it Solutions for semilinear parabolic equations in
$L^p$ and regularity of weak solutions of the Navier-Stokes system},
 J. Differential Equations, 62 (1986), 186–212.}

 \bibitem{HW}{A. Haraux and F. B. Weissler, {\it
Non-uniqueness for a semilinear initial value problem}, Indiana
Univ. Math. J., 31 (1982), 167-189.}

\bibitem{H}{D. Henry, {\it Geometric Theory of Semilinear Parabolic Equations},
Lecture Notes in Mathematics, 840, Springer-Verlag, Berlin-New York,
1981.}

\bibitem{IO} T. Iwabuchi and T. Ogawa, {\it  Ill-posedness for
nonlinear Schr\"odinger equation with quadratic non-linearity in lo dimensions},
 preprint.

\bibitem{MRY}{L. Molinet, F. Ribaud and Y. Youssfi, {\it
Ill-posedness issues for a class of parabolic equations}, Proc. Roy.
Soc. Edinburgh Sect. A, 132 (2002), 1407--1416.}

\bibitem{MV}{L. Molinet and S. Vento, {\it Sharp ill-posedness and
well-posedness results for the KdV-Burgers equation: the real line
case}, Ann. Scuola Norm. Sup. Pisa Cl. Sci.  10(2011), 531--560.}

\bibitem{R1}{F. Ribaud, {\it  Cauchy problem for semilinear parabolic equations with initial data
in $H^s_p({R}^n)$ spaces}, Rev. Mat. Iberoamericana, 14 (1998),
1--46.}

\bibitem{R2}{F. Ribaud, {\it Semilinear parabolic equations with distributions as initial
data}, Discrete Contin. Dynam. Systems,  3 (1997), 305--316.}

\bibitem{STW1}{S. Snoussi, S. Tayachi, and F.B. Weissler,  {\it Asymptotically self-similar
global solutions of a semilinear parabolic equation with a nonlinear
gradient term,} Proc. R. Soc. Edinb., Sect. A, Math. 129 (1999),
1291--1307.}

\bibitem{STW2}{S. Snoussi, S. Tayachi, and F.B. Weissler,  {\it Asymptotically self-similar global solutions of a general semilinear
heat equation}, Math. Ann., 321 (2001), 131--155.}

\bibitem{TX}{Z. Tan and Y. Xu, {\it Existence and nonexistence of global solutions for a semilinear heat equation
with fractional laplacien}, Acta Mathematica Scientia, Ser. B,
32(2012), 2203--2210.}

\bibitem{tataru} {D. Tataru, {\sl On global existence and scattering for the wave maps equation},
Amer. J. Math. 123 (2001),  37--77.}

\bibitem{Taylor} {M. E. Taylor, {\sl Pseudodifferential Operators and Nonlinear PDE},
Progress in Mathematics, 100. Birkha\"user Boston, Inc., Boston, MA,
1991.}

\bibitem{W1}{F. B. Weissler, {\it Semilinear evolution equations in Banach spaces}, J. Funct. Anal. 32 (1979),
277--296.}

\bibitem{W2}{F. B. Weissler, {\it Local existence and nonexistence for semilinear parabolic equation in
$L^p$}, Indiana Univ. Math. J., 29 (1980), 79--102.}

\bibitem{W3}{F. B. Weissler, {\it Existence and nonexistence of global solutions for
a semilinear heat equation},  Israel J. Math., 38  (1981), 29--40.}

\bibitem{W}{J. Wu, {\it Well-posedness of a semilinear heat equation with
weak initial data}, J. Fourier Anal. Appl., 4(1998), 629--642.}

\endthebibliography
\end{document}